\newtheorem{lem}{Lemma}[section]
\newtheorem{theorem}[lem]{Theorem}
\newtheorem{proposition}[lem]{Proposition}
\newtheorem{lemma}[lem]{Lemma}
\newtheorem{corollary}[lem]{Corollary}
\newtheorem{definition}[lem]{Definition}
\newtheorem{problem}[lem]{Problem}
\newtheorem{observation}[lem]{Observation}
\DeclarePairedDelimiter\ceil{\lceil}{\rceil}
\newcounter{case}
\renewcommand{\thecase}{\arabic{case}}
\newcounter{subcase}
\numberwithin{subcase}{case}
\newcounter{subsubcase}
\numberwithin{subsubcase}{subcase}
\begin{document}
\begin{center}
{\bf\large Product irregularity strength of graphs with small clique cover number} \\ [+4ex]

Daniil Baldouski
\\ [+2ex]
{\it University of Primorska, FAMNIT, Glagolja\v ska 8, 6000 Koper, Slovenia}
\end{center}

\begin{abstract}
For a graph $X$  without isolated vertices and without isolated edges, a {\em product-irregular labelling} $\omega:E(X)\rightarrow \{1,2,\ldots,s\}$, first defined by Anholcer in 2009, is a labelling of the edges of $X$ such that 
for any two distinct vertices $u$ and $v$ of $X$ the product of labels of the edges incident with $u$ is
different from the product of labels of the edges incident with $v$.
The minimal $s$ for which there exist a product irregular labeling is called  {\em the product irregularity strength} of $X$ and is denoted by $ps(X)$. {\it Clique cover number} of a graph is the minimum number of cliques that partition its vertex-set. In this paper we prove that connected graphs with clique cover number $2$ or $3$ have the product-irregularity strength equal to $3$, with some small exceptions.
\end{abstract}

\begin{quotation}
\noindent {\em Keywords: product irregularity strength, clique-cover number.}\\
\noindent {\em Math. Subj. Class.: 05C15, 05C70, 05C78}
\end{quotation}

\section{Introduction}
\noindent

Throughout this paper let $X$ be a simple graph, that is, a graph without loops or multiple edges, without isolated vertices and without isolated edges. Let $V(X)$ and $E(X)$ denote the vertex set and the edge set of $X$, respectively. 
Let $\omega :E(X)\rightarrow \{1,2,\ldots, s\}$ be an integer labelling  of the edges of $X$. Then the {\em product degree} $pd_X(v)$ of a vertex $v\in V(X)$ in the graph $X$ with respect to the labelling $\omega$ is defined by
$$
pd_X(v)= \prod_{v\in e} \omega(e).
$$
If the graph $X$ is clear from the context, then we will simply use $pd(v)$.
A labelling $\omega$ is said to be {\it product-irregular}, if any two distinct vertices $u$ and $v$ of $X$ have different corresponding product degrees, that is, $pd_X(u)\ne pd_X(v)$ for any $u$ and $v$ in $V(X)$ ($u\neq v$).
The {\it product irregularity strength} $ps(X)$ of $X$ is the smallest positive integer  $s$ for which there exists a product-irregular labelling $\omega :E(X)\rightarrow \{1,2,\ldots, s\}$.

This concept was first introduced by Anholcer in \cite{A09} as a multiplicative version of the well-studied concept of irregularity strength of graphs introduced by Chartrand et al. in \cite{CJLORS88} and studied later quite extensively (see for example \cite{B, F, HP, P}).
A concept similar to product-irregular labelling is the {\em product anti-magic labeling} of a graph, where it is required that the labeling $\omega$ is bijective (see \cite{K,P07}). It is clear that every product anti-magic labeling is product-irregular. Another related concept is the so-called {\it multiplicative vertex-colouring} (see \cite{SK12,SK17}), where it is required that $pd(u)\neq pd(v)$ for every pair of adjacent vertices $u$ and $v$, while non-adjacent vertices can have the same product degrees. It is easy to see that every product-irregular labelling is a multiplicative vertex-colouring.

 In \cite{A09} Anholcer gave upper and lower bounds on product irregularity strength of graphs. The main results in \cite{A09} are estimates for product irregularity strength of cycles, in particular it was proved that for every $n > 2$
\begin{equation*}
ps(C_n) \geq \ceil{\sqrt{2n} - \frac{1}{2}},
\end{equation*}
and that for every $\varepsilon > 0$ there exists $n_0$ such that for every $n \geq n_0$

\begin{equation*}
ps(C_n) \leq \ceil{(1 + \varepsilon) \sqrt{2n} \mbox{ ln } n}.
\end{equation*}

Anholcer in \cite{A14} considered product irregularity strength of complete bipartite graphs and proved that for two integers $m$ and $n$ such that $2 \geq m \geq n$
it holds
$ps(K_{m,n}) = 3$
if and only if $n \geq$ ${m+2}\choose{2}$.

In \cite{DH15}, Darda and Hujdurovi\' c proved that for any graph $X$ of order at least $4$ with at most one isolated vertex and without isolated edges we have $ps(X) \leq |V(X)| - 1$. Connections between product irregularity strength of graphs and multidimensional multiplication table problem was established, see \cite{F08,K10} for some results on multidimensional multiplication problem.

It is easy to see that the lower bound for the product irregularity strength of any graph is $3$. In this paper we will give some sufficient conditions for a graph to have product irregularity strength equal to $3$. In particular we will prove that graphs of order greater than $6$ with clique-cover number $2$ have product irregularity strength $3$ (see Corollary \ref{cmain1}), where {\it clique cover number} is the minimum number of cliques that partition the vertex set. Moreover, we will prove that for a connected graph such that its vertex set can be partitioned into $3$ cliques of sizes at least $4$ then its product irregularity strength  is $3$ (see Corollary \ref{cmain2}).

The paper is organized as follows. In section 2 we rephrase the definition of product-irregular labellings in terms of the corresponding weighted adjacency matrices and give some constructions that will be used for proving our main results. In section 3 we will determine the product irregularity strength of graphs with clique cover number 2, while in section 4 we study product irregularity strength of graphs with clique cover number 3.

\section{Product-irregular matrices}

In this section we will rephrase the definition of product irregular labelling of graphs using weighted adjacency matrices. We start with the definition of weighted adjacency matrix.

\begin{definition}
Let $w$ be an integer labelling of the edges of a graph $X$ of order n with $V(X) = \{ v_1, v_2, \ldots , v_n \}$. Weighted adjacency matrix of X is $n \times n$ matrix $M$ where $M_{ij} = w(\{ v_i, v_j \})$ if $v_i$ and $v_j$ are adjacent and $M_{ij} = 0$ otherwise.
\end{definition}

\begin{definition}[Product-irregular matrices and product degree for matrices] Assume that we have weighted adjacency $n \times n$ matrix $M$ ($n \geq 2$). Then $pd(M_{v_k}) := {\displaystyle \prod_{M_{k,i} \neq 0} M_{k,i}}$ is the product of all non-zero elements of the row $v_k$. We say that $M$ is product-irregular if $\forall i,j \in \{ 1,2, \ldots, n \}$ for $i \neq j$ $pd(M_{v_i}) \neq pd(M_{v_j})$. 
We will work with matrices with entries $a_{ij} \in \{0,1,2,3\}$ therefore  to simplify reading  for $v \in M$   if $pd(v)=2^{a} \cdot 3^{b}$  then we will use notation $pd(v) := (a, b)$. Also define $pd(v)[1] := a$ and $pd(v)[2] := b$.
\end{definition}

\begin{observation} Labelling of graph is product-irregular if and only if the corresponding weighted adjacency matrix is product-irregular.
\end{observation}

Let $n \geq 4$ and let $M_n(x,y,z)$ be $n \times n$ matrix such that $M_n(x,y,z) = (m_{ij})$ where
$$
m_{ij} = 
\begin{cases} 
0, & $if $ i=j $ $\\ 
x, &$if $ j \leq n-i+1 $ and $ i \neq j\\ 
z, & $if $ (i,j) = (k,n)$ or $(i,j) = (n,k)$ for $k = \ceil{\frac{n}{2}} + 1\\ 
y, &$otherwise $ 
\end{cases}
$$
\\For example:
\begin{equation}
M_7(x, y, z) = \begin{pmatrix}
0 & x & x & x & x & x & x \\
x & 0 & x & x & x & x & y \\
x & x & 0 & x & x & y & y \\
x & x & x & 0 & y & y & y \\
x & x & x & y & 0 & y & z \\
x & x & y & y & y & 0 & y \\
x & y & y & y & z & y & 0
\end{pmatrix}.
\end{equation}
We will denote with $A \oplus B$ the direct sum of matrices $A$ and $B$, that is 
$$
A \oplus B = \begin{pmatrix}
A & 0 \\
0 & B
\end{pmatrix},
$$
where $0$ denotes zero matrix of appropriate size.
\subsection{Properties of $M_n$}
Let $x_i$, $y_i$ and $z_i$ be the number of $x$, $y$ and $z$ respectively appearing in the $i$-th row of matrix $M_n(x,y,z)$. For fixed $n$ with $k$ we denote $k := \ceil{\frac{n}{2}} + 1 $. Then the rows of the matrix $M_n(x,y,z)$ can be separated into 3 types:
\begin{labeling}{alligator}
\item[1st type:] $(x_k,y_k,z_k) = (\ceil{\frac{n-1}{2}}, \ceil{\frac{n}{2}} - 2, 1)$,
\item[2nd type:] $(x_i,y_i,z_i) = (n-i, i-1, 0)$ for $i<k$ and $(x_i,y_i,z_i) = (n-i+1, i-2, 0)$ for $n>i>k$,
\item[3rd type:] $(x_n,y_n,z_n) = (1, n-3, 1)$.
\end{labeling}
We denote by $m_{(i)}(M)$ the row of type $i$ for $i \in \{ 1, 2, 3 \}$ of matrix $M$, where $M$ is matrix $M_n(x,y,z)$ (if the matrix $M_n(x,y,z)$ is clear from the context, then we will simply use $m_{(i)}$). We start by proving the following nice property of matrix $M_n$.

\begin{proposition} 
If $\{ x,y,z \}$ is a set of distinct pairwise relatively prime integers, then $M_n(x,y,z)$ is product irregular matrix for any $n \geq 4$.
\end{proposition}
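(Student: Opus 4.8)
The plan is to read each row's product degree directly off the three-type classification of the rows of $M_n(x,y,z)$ and then argue that no two of these products coincide. For row $v_i$ we have $pd(v_i) = x^{x_i}y^{y_i}z^{z_i}$, so the product degrees are completely encoded by the exponent triples $(x_i,y_i,z_i)$ listed above. Since $x$, $y$ and $z$ are pairwise relatively prime, comparing two such products amounts to comparing the exponents of each shared prime factor; consequently it suffices to show that the triples $(x_i,y_i,z_i)$ are pairwise distinct, the only point requiring mild care being that at most one of $x,y,z$ can equal $1$, so whenever two triples differ we must be sure they differ in a coordinate whose base exceeds $1$.

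First I would isolate the two rows containing a $z$. By construction only the rows of the first and third type, namely $m_{(1)}=m_{(k)}$ and $m_{(3)}=m_{(n)}$, have $z_i=1$, while every second-type row has $z_i=0$. Thus, provided $z>1$, the prime factors of $z$ divide the product degrees of rows $k$ and $n$ but none of the others, which already separates $\{v_k,v_n\}$ from all second-type rows. It then remains to rule out collisions inside each of the two groups.

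For the second-type rows I would show that their first coordinates $x_i$ are pairwise distinct. Splitting according to $i<k$ and $k<i<n$ gives $x_i=n-i$ on the first range and $x_i=n-i+1$ on the second; as $i$ runs through its values these produce exactly the sets $\{n-k+1,\ldots,n-1\}$ and $\{2,\ldots,n-k\}$, which are disjoint and together exhaust $\{2,3,\ldots,n-1\}$. Hence the $n-2$ second-type rows carry $n-2$ distinct $x$-exponents, so by coprimality they have pairwise distinct product degrees. Finally, rows $k$ and $n$ both have $z_i=1$ but $x$-exponents $\ceil{\frac{n-1}{2}}$ and $1$ respectively, and these differ for every $n\geq 4$ since $\ceil{\frac{n-1}{2}}\geq 2$; therefore their product degrees differ as well, completing the argument.

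The main obstacle I anticipate is the bookkeeping underlying the separation step: several second-type rows share an $x$-count, or a $y$-count, with $m_{(k)}$ or $m_{(n)}$ (for instance, in the displayed $M_7$ example the fourth and fifth rows both have three $x$'s), so the $z$-entry is precisely what breaks these ties. The one genuinely delicate case is the degenerate one where $z=1$, in which the divisibility argument above is unavailable; there I would instead compare the $(x_i,y_i)$ parts directly and check that the tuples of $m_{(k)}$ and $m_{(n)}$ do not occur among the second-type rows. Concretely, the second-type row sharing the $x$-count of $m_{(k)}$ has $y$-count $\ceil{\frac{n}{2}}-1$ against $\ceil{\frac{n}{2}}-2$ for $m_{(k)}$, while $m_{(n)}$ has the unique $x$-count equal to $1$, so no collision survives.
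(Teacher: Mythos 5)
Your overall strategy is the paper's: encode each row's product degree by its exponent triple $(x_i,y_i,z_i)$, then verify pairwise distinctness type by type, using the $z$-entry to separate the first- and third-type rows from the second-type rows. However, there is a genuine (if small and repairable) gap. You correctly flag that at most one of $x,y,z$ can equal $1$ and that any separating coordinate must have base exceeding $1$, and you carefully work out the degenerate case $z=1$ --- but you never handle $x=1$, which is precisely the case the paper uses throughout ($A_n = M_n(1,2,3)$). Two of your steps are justified solely by distinct $x$-exponents: (i) pairwise distinctness of the second-type rows (``$n-2$ distinct $x$-exponents, so by coprimality they have pairwise distinct product degrees''), and (ii) the comparison of $m_{(k)}$ with $m_{(n)}$ via $x$-exponents $\ceil*{\frac{n-1}{2}}$ versus $1$. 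When $x=1$ the factor $x^{x_i}$ contributes nothing, so distinct $x$-exponents, as literally invoked, do not separate the products, and the argument fails verbatim in the most relevant instance.

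The clean repair is the one observation of the paper's proof that you omit: every row satisfies $x_i+y_i+z_i=n-1$. From this, $pd(m_i)=pd(m_j)$ if and only if the full triples coincide, no matter which of $x,y,z$ equals $1$: if, say, $x=1$, then $y,z>1$ are coprime, so equal products force $y_i=y_j$ and $z_i=z_j$, and the constant sum then forces $x_i=x_j$. With that in hand, your computations go through unchanged: for second-type rows $z_i=0$, so your distinct $x$-counts are equivalent to distinct $y$-counts; and for $m_{(k)}$ versus $m_{(n)}$ the $y$-counts $\ceil*{\frac{n}{2}}-2$ and $n-3$ differ for all $n\geq 4$. Alternatively you could patch $x=1$ case by case exactly as you did for $z=1$, but note that the paper's constant-sum remark disposes of all base-$1$ degeneracies uniformly and renders the separate $z=1$ analysis unnecessary. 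Everything else in your write-up --- the $z$-separation of types $1$ and $3$ from type $2$, the enumeration of second-type $x$-counts as $\{2,\ldots,n-1\}$, the type-$1$ versus type-$3$ comparison, and the $z=1$ bookkeeping --- is correct and matches the paper's checks.
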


\begin{proof} Suppose contradiction, i.e. there exist $m_i$ and $m_j$ (that are rows of matrix $M_n(x,y,z)$) for some $i \neq j$ such that $pd(m_i) = pd(m_j)$. There are 3 types of rows therefore it is enough to check the equality above not for all rows, but for all types of rows. Observe that for every $i \in \{ 1,2,\ldots,n \}$ the sum $x_i + y_i + z_i = n - 1$ and $pd(m_i) = pd(m_j)$ for some $i \neq j$ if and only if $x_i = x_j$, $y_i = y_j$ and $z_i = z_j$. It follows that:
\begin{enumerate}
\item If $pd(m_{(1)}) = pd(m_{(3)}) \Rightarrow (\ceil{\frac{n-1}{2}}, \ceil{\frac{n}{2}} - 2, 1) = (1, n-3, 1) \Rightarrow n = 3$ which is contradiction.
\item Since rows of second type have value 0 at 3rd coordinate and rows of first and third types have value 1 at 3rd coordinate, then $pd(m_{(2)}) \neq pd(m_{(i)})$ for $i \in \{ 1,3 \}$.
\item It is clear that $(x_i,y_i,z_i) \neq (x_j,y_j,z_j)$ for $i<k$ and $k<j<n$ i.e. product degrees of different rows of type 2 are different.
\end{enumerate}
We were considering different rows, that means we did not have to consider $pd(m_{(i)}) = pd(m_{(i)})$ for every $i \in \{ 1,3 \}$.
\end{proof} 

We will define 3 matrices of class $M_n(x,y,z)$ for specific $x, y$ and $z$. Assign matrix $A_n := M_n(1,2,3)$, $B_n := M_n(2,3,1)$ and $C_n := M_n(3,1,2)$.

\subsection{Properties of $A_n \oplus B_m$}

\begin{lemma} \label{l24}
For every $m \geq n \geq 4$, $A_n \oplus B_m$ is product irregular if $(n,m) \not\in \{ (4,4), (5,5), (6,6) \}$.
\end{lemma}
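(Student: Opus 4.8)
The plan is to reduce everything to comparing product degrees across the two diagonal blocks. Because the off-diagonal blocks of $A_n \oplus B_m$ are zero, the product degree of a row of the direct sum is just the product degree of the corresponding row of $A_n$ (for the first $n$ rows) or of $B_m$ (for the last $m$ rows). By the Proposition proved above, applied to the pairwise coprime triple $\{1,2,3\}$, each of $A_n$ and $B_m$ is already product irregular, so two rows lying in the same block can never collide. Hence it suffices to rule out a collision between a row of $A_n$ and a row of $B_m$, and to locate exactly when such a cross-collision is forced.

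First I would pass to the exponent notation $pd(v) = (a,b)$, meaning $pd(v) = 2^a 3^b$. For $A_n = M_n(1,2,3)$ we have $x=1$, $y=2$, $z=3$, so a row with counts $(x_i,y_i,z_i)$ has $pd = (y_i, z_i)$; for $B_m = M_m(2,3,1)$ we have $x=2$, $y=3$, $z=1$, so a row with counts $(x_j,y_j,z_j)$ has $pd = (x_j, y_j)$. Reading the three row types off the list of the three row types above, the product degrees occurring in $A_n$ are $(c,0)$ for every $c \in \{0,1,\ldots,n-3\}$ (the type-2 rows, each value attained once), together with $(\ceil{\frac{n}{2}}-2,1)$ from the type-1 row and $(n-3,1)$ from the type-3 row. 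The decisive feature is that the exponent of $3$ in every row of $A_n$ equals $0$ or $1$.

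Consequently a cross-collision can only involve a row of $B_m$ whose exponent of $3$, namely its $y$-count, lies in $\{0,1\}$. Scanning the types of $B_m$, the $y$-count equals $j-1$ on type-2 rows with $j<k$, equals $j-2$ on type-2 rows with $j>k$, equals $\ceil{\frac{m}{2}}-2$ on the type-1 row, and equals $m-3$ on the type-3 row. Since $k = \ceil{\frac{m}{2}}+1 \geq 3$, the only type-2 rows with $y$-count in $\{0,1\}$ are $j=1$, giving $pd=(m-1,0)$, and $j=2$, giving $pd=(m-2,1)$; the type-3 row enters the admissible range only for $m=4$ (where $pd=(1,1)$), and the type-1 row only for $m=4$ (giving $(2,0)$), $m=5$ (giving $(2,1)$), and $m=6$ (giving $(3,1)$). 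This leaves a short, explicit list of candidate rows of $B_m$.

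It then remains to test this list against the product degrees of $A_n$, using $m \geq n$ at every step. Matching the exponent of $3$ first, the candidates $(m-1,0)$ and (only for $m=4$) $(2,0)$ would have to equal some $(c,0)$ with $c \le n-3$; the first forces $m \le n-2$ and the second forces $n \ge 5$ together with $n \le m = 4$, both impossible. Likewise $(m-2,1)$ equal to $(\ceil{\frac{n}{2}}-2,1)$ or $(n-3,1)$ would force $m=\ceil{\frac{n}{2}}$ or $m=n-1$, again impossible under $m \geq n$. The surviving candidates $(1,1)$, $(2,1)$, $(3,1)$ (for $m=4,5,6$) can only meet the two rows of $A_n$ with exponent of $3$ equal to $1$: matching the type-1 row $(\ceil{\frac{n}{2}}-2,1)$ once more forces $n$ strictly larger than $m$ and is excluded, whereas matching the type-3 row $(n-3,1)$ succeeds precisely when $n-3$ equals $1,2,3$, i.e. at $(n,m) \in \{(4,4),(5,5),(6,6)\}$. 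Outside these three pairs no collision survives. I expect the main obstacle to be the careful bookkeeping of the boundary rows of types $1$ and $3$ for small $m$, since that is exactly where the three genuine exceptions arise and where the hypothesis $m \geq n$ must be invoked most delicately.
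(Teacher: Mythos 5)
Your argument is correct and follows essentially the same route as the paper: both reduce to cross-collisions between rows of $A_n$ and $B_m$ (within-block irregularity coming from the Proposition) and compare the product degrees of the three row types under the hypothesis $m \geq n$, arriving at exactly the exceptional pairs $(4,4)$, $(5,5)$, $(6,6)$. The only difference is organizational---you prune the case analysis by first filtering on the exponent of $3$, whereas the paper checks all nine type-versus-type pairs directly.
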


\begin{proof}
Suppose contradiction, i.e. there exist $a_i$ and $b_j$ (that are rows of matrices $A_n$ and $B_m$ respectively) for some i and j such that $pd(a_i) = pd(b_j)$. There are 3 types of rows therefore it is enough to check all of the 9 possibilities for different types of rows:
\begin{enumerate}
\item if $pd(a_{(1)}) = pd(b_{(1)}) \Rightarrow (\ceil{\frac{n}{2}} - 2, 1) = (\ceil{\frac{m-1}{2}}, \ceil{\frac{m}{2}} - 2)$ which contradicts with $m \geq n$.
\item if $pd(a_{(1)}) = pd(b_{(2)}) \Rightarrow (\ceil{\frac{n}{2}} - 2, 1) = (m - j, j - 1)$ or $(\ceil{\frac{n}{2}} - 2, 1) = (m - j + 1, j - 2)$ which contradicts with $m \geq n \geq 4$.
\item if $pd(a_{(1)}) = pd(b_{(3)}) \Rightarrow (\ceil{\frac{n}{2}} - 2, 1) = (1, m - 3) \Rightarrow (n,m) = (4,4)$ or $(5,4)$  which is contradiction.
\item if $pd(a_{(2)}) = pd(b_{(1)}) \Rightarrow (i - 1, 0) = (\ceil{\frac{m-1}{2}}, \ceil{\frac{m}{2}} - 2)$ or $(i - 2, 0) = (\ceil{\frac{m-1}{2}}, \ceil{\frac{m}{2}} - 2) \Rightarrow m=3$ or $4 \Rightarrow m=4 \Rightarrow n>i$ which is contradiction.
\item if $pd(a_{(2)}) = pd(b_{(2)})$. We have that $pd(a_{(2)})[2] = 0 \Rightarrow pd(b_{(2)})[2] = 0 \Rightarrow pd(b_{(2)})[1] > pd(a_{(2)})[1]$ which is contradiction.
\item if $pd(a_{(2)}) = pd(b_{(3)}) \Rightarrow pd(a_{(2)})[2] = 0$ and $pd(b_{(3)})[2] > 0$ which is contradiction.
\item if $pd(a_{(3)}) = pd(b_{(1)}) \Rightarrow (n-3, 1) =  (\ceil{\frac{m-1}{2}}, \ceil{\frac{m}{2}} - 2) \Rightarrow (n,m) = (5,5)$ or $(6,6)$ which is contradiction.
\item if $pd(a_{(3)}) = pd(b_{(2)}) \Rightarrow (n - 3, 1) = (m-j, j-1)$ or $(n - 3, 1) = (m-j+1, j-2) \Rightarrow n > m$ in both cases which is contradiction.
\item if $pd(a_{(3)}) = pd(b_{(3)}) \Rightarrow (n - 3, 1) = (1 , m - 3) \Rightarrow (n, m) = (4, 4)$ which is contradiction.
\end{enumerate}
This finishes the proof.
\end{proof}

For the next lemma we need to consider weighted adjacency matrix 
\begin{equation} \label{eq:T3}
T := 
\begin{pmatrix}
0 & 1 & 2 \\ 
1 & 0 & 3 \\ 
2 & 3 & 0 
\end{pmatrix}
\end{equation}
Observe that $pd(T_1) = (1,0)$, $pd(T_2) = (0,1)$, $pd(T_3) = (1,1) \Rightarrow ps(K_3) = 3$.
\begin{lemma} \label{l234}
Let T be the matrix defined in \eqref{eq:T3} then for every $n \geq 5$ $T \oplus B_n$ is product irregular.
\end{lemma}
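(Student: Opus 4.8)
The plan is to exploit the block-diagonal structure of $T \oplus B_n$, where $B_n = M_n(2,3,1)$. Every row of $T \oplus B_n$ is either a row of the $T$-block or a row of the $B_n$-block padded with zeros, and in either case all of its nonzero entries, hence its product degree, come from a single block. Consequently, product-irregularity of $T \oplus B_n$ is equivalent to three assertions: (i) the three rows of $T$ have pairwise distinct product degrees; (ii) $B_n$ is product irregular; and (iii) no row of $T$ shares its product degree with any row of $B_n$. Assertion (i) is immediate from the values $pd(T_1)=(1,0)$, $pd(T_2)=(0,1)$, $pd(T_3)=(1,1)$ recorded after \eqref{eq:T3}. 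Assertion (ii) follows from the earlier Proposition on $M_n(x,y,z)$: since $\{2,3,1\}$ is a set of distinct pairwise relatively prime integers, $B_n = M_n(2,3,1)$ is product irregular for every $n \geq 4$.

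It remains to establish (iii), which is where the real work lies. Because the label $x=2$ contributes $(1,0)$ to the exponent vector, $y=3$ contributes $(0,1)$, and $z=1$ contributes $(0,0)$, a row of $B_n$ carrying $x_i$ entries equal to $2$ and $y_i$ entries equal to $3$ has product degree exactly $(x_i,y_i)$. Using the three row types of $M_n(x,y,z)$ listed above, I would record that a type-$1$ row of $B_n$ has $pd=(\ceil{\frac{n-1}{2}},\ceil{\frac{n}{2}}-2)$, a type-$2$ row has $pd=(n-i,i-1)$ when $i<k$ and $pd=(n-i+1,i-2)$ when $k<i<n$, and the unique type-$3$ row has $pd=(1,n-3)$. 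I then compare each of these against the three target pairs $(1,0)$, $(0,1)$, $(1,1)$, all of which have \emph{both} coordinates at most $1$.

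The heart of the argument is the following bounding of exponents, which I would verify type by type. For $n\geq 5$ every type-$1$ and type-$2$ row has first coordinate (the exponent of $2$) at least $2$: indeed $\ceil{\frac{n-1}{2}}\geq 2$, while for type-$2$ rows the index ranges $1\leq i\leq k-1$ and $k<i\leq n-1$ force $n-i\geq\floor{\frac{n}{2}}\geq 2$ and $n-i+1\geq 2$ respectively. Since each target has first coordinate at most $1$, no type-$1$ or type-$2$ row can clash with a row of $T$. For the single type-$3$ row the second coordinate (the exponent of $3$) equals $n-3\geq 2$, which already exceeds the second coordinate of every target, so it cannot clash either; this settles (iii). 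I expect the main obstacle to be nothing deeper than this exponent bookkeeping, in particular keeping the two subcases of the type-$2$ rows and the ceilings straight. The threshold $n\geq 5$ is sharp and must be respected: for $n=4$ the type-$3$ row has $pd=(1,n-3)=(1,1)=pd(T_3)$, so $T\oplus B_4$ already violates (iii), which explains why the hypothesis $n\geq 5$ is exactly what is needed.
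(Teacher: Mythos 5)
Your proof is correct, but it takes a different route from the paper. The paper's own proof is a two-line reduction: it observes that the product degrees of the rows of $T$, namely $(1,0)$, $(0,1)$, $(1,1)$, form a subset of the product degrees of the rows of $A_4$ (whose four rows have degrees $(0,0)$, $(1,0)$, $(0,1)$, $(1,1)$), and then invokes Lemma \ref{l24}, which already guarantees that $A_4 \oplus B_n$ is product irregular for all $n \geq 5$ since $(4,n)$ avoids the exceptional set $\{(4,4),(5,5),(6,6)\}$; distinctness of the degrees in $A_4 \oplus B_n$ then transfers to the subset realized by $T \oplus B_n$. You instead verify the statement from scratch: you reduce to the three assertions (i)--(iii) via the block structure, get (i) from the recorded values, (ii) from the Proposition on $M_n(x,y,z)$ with the pairwise coprime labels $2,3,1$, and prove the disjointness (iii) by direct exponent bounds, checking that every type-$1$ and type-$2$ row of $B_n$ has first coordinate at least $\floor{\frac{n}{2}} \geq 2$ while the type-$3$ row has second coordinate $n-3 \geq 2$, against targets whose coordinates are all at most $1$. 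Your bookkeeping is accurate in all cases (including the two type-$2$ index ranges), so the argument stands on its own. What each approach buys: the paper's reduction is shorter and reuses the nine-case analysis already done in Lemma \ref{l24}, avoiding any new comparisons; yours is self-contained, does not depend on the correctness of that earlier case analysis, and has the added virtue of making the sharpness of the threshold explicit --- your observation that for $n=4$ the type-$3$ row of $B_4$ has degree $(1,1) = pd(T_3)$ shows exactly why $n \geq 5$ is required, a point the paper leaves implicit in the exceptional pair $(4,4)$ of Lemma \ref{l24}.
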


\begin{proof}
Observe that $\{ pd(T_i)$ for $i \in \{ 1,2,3 \} \} \subset \{ pd((A_4)_i)$ for $i \in \{ 1,2,3,4 \} \}$ and we know from Lemma \ref{l24} that $\forall n \geq 5$ $A_4 \oplus B_n$ is product irregular.
\end{proof}

\section{Graphs with clique-cover number 2}

In this section we consider product irregularity strength of connected graphs with clique cover number two. 
Suppose that $G$ is a graph with clique-cover number $2$, that is the vertex set of $G$ can be partitioned into two cliques $C_1$ and $C_2$, of sizes $n$ and $m$ respectively. Then it follows that $G$ has a spanning subgraph isomorphic to $K_n+K_m$, where for two graphs $H_1$ and $H_2$, $H_1+H_2$ denotes the disjoint union of $H_1$ and $H_2$. 
Then by \cite[Lemma 1]{DH15} it follows that $3\leq ps(G)\leq ps(K_n+K_m)$.
Hence we will start by considering product irregularity strength of $K_n+K_m$.

It can be proved that any $4 \times 4$ weighted adjacency matrix $M$ (with weights $1, 2$ and $3$) is product irregular if and only if there exist row $m \in M$ such that $pd(m) = (1,1)$. Therefore $ps(K_4 + K_4) > 3$. There are a lot of graphs of the form $K_n + K_m$ for some integers $n$ and $m$ with product irregularity strength greater than $3$. But since such graphs are disconnected, we will define operation of adding an edge between components of these graphs, i.e. we will consider minimal connected graphs with clique cover number $2$.

\begin{definition}[$+edge$]
Let $G_1$ and $G_2$ be two graphs with disjoint vertex sets. With $G_1 + G_2 + edge$ we denote a graph obtained by taking disjoint union of $G_1$ and $G_2$ and adding an edge between two vertices of $G_1$ and $G_2$. 
\end{definition}

\begin{lemma} \label{l224}
$\forall n \geq 4$, $ps(K_2 + K_n + edge)=3$.
\end{lemma}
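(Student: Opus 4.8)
The plan is to use that $ps(X)\ge 3$ holds for every admissible $X$, so it suffices to exhibit a single product-irregular labelling $\omega$ with values in $\{1,2,3\}$. I would write the $K_2$ on $\{a_1,a_2\}$ with edge $a_1a_2$, the $K_n$ on $\{b_1,\dots,b_n\}$, and take the connecting edge to join $a_2$ to $b_1$. Since $K_n$ is vertex-transitive I may take $b_1$ to be whichever vertex I please, so I would label the edges of $K_n$ by $B_n=M_n(2,3,1)$ with $b_1$ in the role of the first (all-$x$) row; recall that this row of type $2$ with $i=1$ is the all-$2$ row, of product degree $2^{n-1}=(n-1,0)$. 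Finally I set $\omega(a_2b_1)=2$ and $\omega(a_1a_2)=1$.

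Next I would compute the $n+2$ product degrees. The edges inside $K_n$ are untouched, so by the Proposition (applicable since $1,2,3$ are distinct and pairwise coprime) the vertices $b_2,\dots,b_n$ keep the pairwise distinct degrees prescribed by $B_n$, while $b_1$ is the only interior row affected by the new edge: its degree becomes $2^{n-1}\cdot 2=2^{n}=(n,0)$. The two new vertices get $pd(a_1)=\omega(a_1a_2)=1=(0,0)$ and $pd(a_2)=\omega(a_1a_2)\cdot\omega(a_2b_1)=2=(1,0)$.

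It then remains to verify that the three values $(n,0)$, $(0,0)$, $(1,0)$ are mutually distinct and avoid every $B_n$-degree other than the now-vacated value $(n-1,0)$ of the old first row. For $(n,0)$ this is immediate: among the rows of $B_n$ the maximal power of $2$ is $2^{n-1}$, attained only by the first row, so no surviving row has first coordinate $n$, and $(n,0)$ has vanishing $3$-part. For $(0,0)$ and $(1,0)$ I would argue from the shape of $B_n$: for $n\ge 4$ every row has at least $n-1\ge 3$ nonzero entries, and entries equal to $1$ occur only in the single $z$-position of the type-$1$ and type-$3$ rows, so no row can have degree $(0,0)$ or $(1,0)$. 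Concretely, the type-$2$ degrees $(n-i,i-1)$ and $(n-i+1,i-2)$ other than the first row all have positive $3$-part, the type-$1$ degree has first coordinate $\lceil(n-1)/2\rceil\ge 2$, and the type-$3$ degree $(1,n-3)$ has positive $3$-part. As $(0,0),(1,0),(n,0)$ are also pairwise distinct for $n\ge 4$, all $n+2$ product degrees differ, giving $ps(K_2+K_n+edge)=3$.

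The only genuinely delicate point is the bookkeeping in the last paragraph: one must ensure that multiplying the chosen row by $2$ pushes its degree strictly past every other power of $2$ occurring in $B_n$, which is precisely why bridging to the all-$2$ row rather than an arbitrary vertex is essential, and simultaneously that the two small degrees $1$ and $2$ contributed by the pendant $K_2$ land in the gap below the $B_n$ spectrum. I expect no further obstacle; in particular the small values $n=4,5,6$ need no separate treatment here, since the arithmetic inequalities used above already hold from $n=4$ onward.
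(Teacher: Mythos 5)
Your proof is correct and follows essentially the same route as the paper: both label the $K_2$-edge with $1$, use $B_n=M_n(2,3,1)$ on the clique $K_n$, attach the bridge to the all-$2$ vertex so that its modified product degree escapes the $B_n$ spectrum, and check that the two small degrees of the $K_2$ avoid the remaining rows. The only difference is cosmetic: the paper weights the bridge $3$, so the modified degree $(n-1,1)$ beats every other row in exponent sum, while you weight it $2$, so $(n,0)$ beats every other row in its $2$-exponent.
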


\begin{proof}
Consider weighted adjacency $(n+2) \times (n+2)$ matrix
\begin{equation} \label{eq:L}
L = \begin{pmatrix} 
0 & 1 & 3 & 0 & \cdots & 0 \\ 
1 & 0 & 0 & 0 & \cdots & 0 \\ 
3 & 0 \\
\vdots & \vdots \\
0 & 0 & & B_n \\
\vdots & \vdots \\ 
0 & 0 
\end{pmatrix}
\end{equation}
where $L_{1,3} = L_{3, 1} = 3$. Clearly, $L$ is weighted adjacency matrix of the graph $K_2 + K_n$. We will show that $L$ is product-irregular. Since we have that $pd((B_n)_i) = pd(L_{i+2})$ for every $i \in \{ 2, 3, \ldots n \}$ it is enough to show that product degrees of first $3$ rows of matrix $L$ are different and do not belong to the set $\{ pd((B_n)_i)$, $i \in \{ 2,3,\ldots,n \} \}$.
\begin{enumerate}
\item It is clear that those rows are different and that first two rows of $L$ are not in the set $\{ pd((B_n)_i)$, $i \in \{ 2,3,\ldots,n \} \}$.
\item For the row $L_3$ we have that
$pd(L_3) = pd((B_n)_1) \cdot 3 = (n-1, 1)$. Therefore $pd(L_3)[1] + pd(L_3)[2] = n - 1 + 1 > n-1 \geq pd((B_n)_j)[1] + pd((B_n)_j)[2]$ for any $j \in \{ 2,3, \ldots, n \}$.
\end{enumerate}
This finishes the proof.
\end{proof}

\begin{corollary} \label{c214}
$\forall n \geq 4$, $ps(K_1 + K_n + edge)=3$.
\end{corollary}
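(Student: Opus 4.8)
The plan is to derive this directly from Lemma~\ref{l224} by a vertex-deletion argument, together with the fact that $ps(X)\ge 3$ holds for every admissible graph $X$; it therefore suffices to exhibit a product-irregular labelling of $K_1+K_n+edge$ with labels in $\{1,2,3\}$. For the upper bound I would start from the $(n+2)\times(n+2)$ matrix $L$ of \eqref{eq:L}, which is the product-irregular weighted adjacency matrix of $K_2+K_n+edge$ built in the proof of Lemma~\ref{l224}. In $L$ the two vertices of the $K_2$ are rows $1$ and $2$: vertex $2$ is adjacent only to vertex $1$, via an edge of label $1$, while vertex $1$ is additionally joined to the $K_n$-vertex of row $3$ by the label-$3$ edge $L_{1,3}=L_{3,1}=3$, and the remaining block is $B_n$.

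The key observation is that deleting row and column $2$ from $L$ produces exactly the weighted adjacency matrix $L'$ of $K_1+K_n+edge$: removing the degree-one vertex $2$ leaves one vertex (old row $1$) attached by a single edge of label $3$ to a vertex of $K_n$ (old row $3$), with the $K_n$-block $B_n$ untouched. So the first step would be to confirm this identification of $L'$ with $K_1+K_n+edge$.

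Next I would show that $L'$ is still product-irregular. Since vertex $2$ is adjacent in $L$ only to vertex $1$, its deletion alters no product degree except possibly that of vertex $1$; but that product degree is unchanged as well, because the erased edge carries the neutral label $1$. Concretely $pd(L'_1)=3=(0,1)=pd(L_1)$, while every other surviving row is literally unchanged. Hence the collection of product degrees of $L'$ is obtained from that of $L$ simply by discarding the single value $pd(L_2)=(0,0)$, and the remaining values are pairwise distinct because $L$ is product-irregular by Lemma~\ref{l224}.

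Combining the lower bound $ps(K_1+K_n+edge)\ge 3$ with the labelling $L'$ then gives $ps(K_1+K_n+edge)=3$ for all $n\ge 4$. I do not anticipate a real obstacle here: the whole argument rests on the one point that must be checked carefully, namely that erasing a label-$1$ edge leaves $pd(L_1)$ intact, which is precisely what makes the deletion harmless and lets the product-irregularity of $L$ transfer to $L'$.
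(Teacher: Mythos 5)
Your proposal is correct and is essentially the paper's own proof: the paper likewise obtains $L'$ by deleting the second row and column of $L$ and asserts product-irregularity, while you have simply supplied the (correct) justification it leaves implicit, namely that the deleted vertex has degree one with a label-$1$ edge, so $pd(L_1)=(0,1)$ and all other product degrees survive unchanged, and the multiset of product degrees loses only $pd(L_2)=(0,0)$.
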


\begin{proof}
Consider matrix $L'$ obtained from matrix $L$ from \eqref{eq:L} by deleting second row and column. Clearly, $L'$ is product-irregular.
\end{proof}

\begin{theorem} \label{tmain1}
For every n and m that are greater or equal than 1 and such that $n + m > 6$ we have $ps(K_n + K_m + edge) = 3$.
\end{theorem}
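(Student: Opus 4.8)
The plan is to prove the upper bound $ps(K_n+K_m+edge)\le 3$ by exhibiting a product-irregular labelling with labels in $\{1,2,3\}$; since the graph is connected on more than six vertices (hence has no isolated vertex and no isolated edge), the general lower bound $ps\ge 3$ from the introduction applies and yields equality. Because $K_n+K_m+edge\cong K_m+K_n+edge$, I may assume $n\le m$. The small values of $n$ are already settled: $n=1$ (so $m\ge 6$) is Corollary \ref{c214}, and $n=2$ (so $m\ge 5$) is Lemma \ref{l224}.

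The key idea for everything else is that adding the connecting edge with label $1$ multiplies the two affected product degrees by $1$, i.e.\ changes nothing; so if $K_n+K_m$ admits a product-irregular labelling over $\{1,2,3\}$, then so does $K_n+K_m+edge$. Consequently, whenever $n\ge 4$ and $(n,m)\notin\{(4,4),(5,5),(6,6)\}$ I take $A_n\oplus B_m$, which is the weighted adjacency matrix of $K_n+K_m$ and is product irregular by Lemma \ref{l24}, and simply append the edge with label $1$. Likewise, for $n=3$ and $m\ge 5$ I use $T\oplus B_m$, product irregular by Lemma \ref{l234}, and append the edge with label $1$.

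What then remains are the four exceptional pairs $(n,m)\in\{(3,4),(4,4),(5,5),(6,6)\}$, and this is the heart of the argument: for these the natural base matrix ($A_n\oplus B_m$, or $T\oplus B_4$ when $n=3$) is no longer product irregular, so the label-$1$ trick fails. In each case a quick inspection shows there is exactly one colliding pair of rows, namely the last row of $A_n$ (resp.\ the row $T_3$) against a single row of the $B$-block. I would break every such collision by the same device: join the colliding row of the first block to the second row $b_2$ of the $B$-block by an edge of label $2$. In $pd$-coordinates $2=(1,0)$, so this adds $1$ to the first coordinate of both endpoints. One checks that the colliding row is thereby shifted exactly onto the value that $b_2$ originally occupied, while $b_2$ is pushed to a value that was not previously used; since $b_2$ vacates the target value and its former collision-partner stays fixed, the collision is destroyed and no new one appears.

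The main obstacle is to confirm that these shifts introduce no unintended coincidence. I expect to settle this by simply tabulating, for each of the four pairs, the at most twelve product degrees of the modified matrix and checking that they are pairwise distinct; this is a short finite verification. The only additional points to record are that the chosen endpoints are genuinely one vertex of $K_n$ and one of $K_m$, so the resulting graph really is $K_n+K_m+edge$, and that the moved rows do not coincide with any fixed row --- both of which are read off directly from the tabulation.
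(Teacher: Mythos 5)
Your proposal is correct, and it reaches the theorem along the same outer skeleton as the paper: reduce by symmetry, quote Corollary \ref{c214} and Lemma \ref{l224} for $n\in\{1,2\}$, and for the generic pairs use Lemma \ref{l24} (resp.\ Lemma \ref{l234} when $n=3$) together with the observation that a connecting edge labelled $1$ leaves every product degree unchanged --- this last observation is also the paper's implicit step. Where you genuinely diverge is on the exceptional pairs $(3,4),(4,4),(5,5),(6,6)$, which are indeed the heart of the matter. The paper settles them with ad hoc explicit matrices: $T_5\oplus\tilde{T_5}$ and $T_6\oplus\tilde{T_6}$ prove the stronger statements $ps(K_5+K_5)=3$ and $ps(K_6+K_6)=3$ with no connecting edge at all, a bespoke $8\times 8$ matrix (connecting edge labelled $3$) handles $(4,4)$ --- where some edge is unavoidable, since $ps(K_4+K_4)>3$ --- and $(3,4)$ is dispatched by a reduction. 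Your uniform repair works in all four cases, and the finite check you defer does succeed: the unique collision is the $pd$ of the last row of the first block, $(n-3,1)$ (resp.\ $(1,1)$ for $T_3$), against a single row of the $B$-block; adding the label-$2$ edge from that row to $b_2$ sends it to $(n-2,1)$, which is exactly the vacated value $pd(b_2)=(m-2,1)$, while $b_2$ moves to $(m-1,1)$. Concretely, the resulting degree lists are: for $(4,4)$, $(0,0),(1,0),(0,1),(2,1)$ and $(3,0),(3,1),(2,0),(1,1)$; for $(3,4)$, $(1,0),(0,1),(2,1)$ and $(3,0),(3,1),(2,0),(1,1)$; for $(5,5)$, $(0,0),(1,0),(2,0),(1,1),(3,1)$ and $(4,0),(4,1),(2,2),(2,1),(1,2)$; for $(6,6)$, $(0,0),(1,0),(2,0),(1,1),(3,0),(4,1)$ and $(5,0),(5,1),(3,2),(3,1),(2,3),(1,3)$ --- all pairwise distinct, so all four modified matrices are product-irregular. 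The trade-off: your route is more systematic (one perturbation scheme replaces four hand-built matrices) but proves only the edge-added statement, whereas the paper's explicit $T_5,\tilde{T_5},T_6,\tilde{T_6}$ are not a luxury --- they are reused in Section 4 (Lemmas \ref{l_667}, \ref{l3567}, \ref{l3556} and \ref{l3475}), where the disconnected facts $ps(K_5+K_5)=3$ and $ps(K_6+K_6)=3$ are needed. For Theorem \ref{tmain1} itself, your argument is complete once the four tabulations above are recorded.
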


\begin{proof}
Consider some cases that were not covered by previous Lemmas:
\begin{enumerate}
\item[(i)] $ps(K_5 + K_5) = 3$. For proving this fact we can take direct sum of the following weighted adjacency matrices: 
\begin{equation} \label{eq:T5}
T_5 :=
\begin{pmatrix} 
0 & 3 & 1 & 1 & 1 \\ 
3 & 0 & 1 & 3 & 2 \\ 
1 & 1 & 0 & 1 & 1 \\ 
1 & 3 & 1 & 0 & 2 \\ 
1 & 2 & 1 & 2 & 0 
\end{pmatrix} 
\mbox{   and   }
\tilde{T_5} :=
\begin{pmatrix}
0 & 2 & 2 & 2 & 1 \\
2 & 0 & 3 & 3 & 3 \\
2 & 3 & 0 & 2 & 3 \\
2 & 3 & 2 & 0 & 1 \\
1 & 3 & 3 & 1 & 0
\end{pmatrix}
\end{equation}
\item[(ii)] $ps(K_6 + K_6) = 3$. For proving this fact we can take direct sum of the following weighted adjacency matrices:
\begin{equation} \label{eq:T6}
T_6 :=
\begin{pmatrix}
0 & 1 & 2 & 3 & 1 & 3 \\
1 & 0 & 1 & 3 & 1 & 1 \\
2 & 1 & 0 & 1 & 2 & 2 \\
3 & 3 & 1 & 0 & 1 & 1 \\
1 & 1 & 2 & 1 & 0 & 1 \\
3 & 1 & 2 & 1 & 1 & 0
\end{pmatrix}
\mbox{   and   }
\tilde{T_6} :=
\begin{pmatrix}
0 & 2 & 3 & 3 & 3 & 3 \\
2 & 0 & 2 & 3 & 3 & 2 \\
3 & 2 & 0 & 2 & 1 & 2 \\
3 & 3 & 2 & 0 & 3 & 1 \\
3 & 3 & 1 & 3 & 0 & 3 \\
3 & 2 & 2 & 1 & 3 & 0
\end{pmatrix}
\end{equation}

\end{enumerate}

Also consider some cases that could not be proved without adding edges between cliques.
\begin{enumerate}
\item[(iii)] $ps(K_4 + K_4 + edge) = 3$. For proving this fact we will consider the following product-irregular matrix: 
\begin{equation}
\begin{pmatrix}
0 & 1 & 1 & 1 & 0 & 0 & 0 & 0 \\
1 & 0 & 1 & 2 & 0 & 0 & 0 & 0 \\
1 & 1 & 0 & 3 & 0 & 0 & 0 & 0 \\
1 & 2 & 3 & 0 & 3 & 0 & 0 & 0 \\
0 & 0 & 0 & 3 & 0 & 2 & 2 & 2 \\
0 & 0 & 0 & 0 & 2 & 0 & 2 & 3 \\
0 & 0 & 0 & 0 & 2 & 2 & 0 & 1 \\
0 & 0 & 0 & 0 & 2 & 3 & 1 & 0
\end{pmatrix}
\end{equation}
\item[(iv)] $ps(K_3 + K_4 + edge) = 3$ follows from the previous item (i) using the same proof as in the Lemma 3.
\end{enumerate}

The proof now follows by Lemmas \ref{l24}, \ref{l234} and \ref{l224} and Corollary \ref{c214}.

\end{proof}

\begin{corollary} \label{cmain1}
If $G$ is a connected graph of order at least 7 with clique-cover number $2$ then $ps(G) = 3$.
\end{corollary}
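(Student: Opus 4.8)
The plan is to bound $ps(G)$ from above by exhibiting a spanning subgraph of $G$ of the form $K_n+K_m+edge$ and invoking Theorem~\ref{tmain1}, and to bound it from below by the universal lower bound $3$ stated in the introduction.

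First I would use the hypothesis that $G$ has clique-cover number $2$ to write $V(G)=C_1\cup C_2$ as a disjoint union of two nonempty cliques, with $n=|C_1|$ and $m=|C_2|$ (both parts are nonempty, since otherwise the clique-cover number would be $1$). Because the order of $G$ is at least $7$, we have $n+m=|V(G)|\geq 7$, so in particular $n,m\geq 1$ and $n+m>6$, which are exactly the hypotheses required by Theorem~\ref{tmain1}.

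Next, since $G$ is connected there must be at least one edge joining a vertex of $C_1$ to a vertex of $C_2$; fix one such edge $e$. All edges internal to $C_1$ and to $C_2$ are present in $G$, because these vertex sets are cliques, so the graph $H$ on $V(G)$ consisting of those internal edges together with $e$ is a spanning subgraph of $G$ isomorphic to $K_n+K_m+edge$. Here I would note that this isomorphism type does not depend on the choice of endpoints of $e$, since cliques are vertex-transitive, so $K_n+K_m+edge$ is well defined up to isomorphism and $ps$ is an isomorphism invariant.

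Finally, by the monotonicity of product irregularity strength under passage to spanning subgraphs, \cite[Lemma~1]{DH15} gives $ps(G)\leq ps(H)=ps(K_n+K_m+edge)$, and Theorem~\ref{tmain1} evaluates the right-hand side as $3$ because $n+m>6$. Combined with the trivial lower bound $ps(G)\geq 3$, which applies because $G$, being connected of order at least $7$, has neither isolated vertices nor an isolated edge, this yields $ps(G)=3$. There is no genuine obstacle once Theorem~\ref{tmain1} is available; the only points needing care are getting the direction of the inequality in \cite[Lemma~1]{DH15} right (the sparser spanning subgraph carries the larger strength) and confirming that the three hypotheses are each used precisely to supply the two cliques, the cross edge, and the bound $n+m>6$.
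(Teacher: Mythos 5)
Your proof is correct and follows exactly the route the paper intends: the corollary is stated without a separate proof because it is precisely the combination of the spanning-subgraph monotonicity $3\leq ps(G)\leq ps(K_n+K_m+edge)$ from \cite[Lemma~1]{DH15} (set up at the start of Section~3) with Theorem~\ref{tmain1}, which is what you do. Your added care about nonemptiness of both cliques, the existence of a cross edge via connectivity, and the well-definedness of $K_n+K_m+edge$ up to isomorphism only makes explicit what the paper leaves implicit.
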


Observe that $ps(K_3+K_3+edge)=4$, i.e. 6 is the lower bound of the sum $n+m$ in the Theorem 1.

\section{Graphs with clique-cover number 3}
In this section we consider the product irregularity strength of graphs with clique-cover number $3$. Observe that a graph $G$ has clique cover number $3$, if and only if its complement has chromatic number equal to $3$. If $G$ is a graph with clique cover number tree, then its vertex set can be partitioned into three cliques, of sizes $n$, $m$ and $l$. Then it follows that $G$ has a spanning subgraph isomorphic to $K_n+K_m+K_l$, hence we will first investigate the product irregularity strength of such graphs.

\subsection{Properties of $A_n \oplus B_m \oplus C_l$}

\begin{lemma} \label{l27_4}
$\forall n \geq 7$ and $m \geq 4$, $A_n \oplus C_m$ is product irregular.
\end{lemma}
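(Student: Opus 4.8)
The plan is to follow exactly the structure of the proof of Lemma~\ref{l24}, namely to assume for contradiction that some row $a_i$ of $A_n$ and some row $c_j$ of $C_m$ satisfy $pd(a_i)=pd(c_j)$, and then to dispose of all nine combinations arising from the three row-types of each matrix. The crucial observation is that, although $A_n=M_n(1,2,3)$ and $C_m=M_m(3,1,2)$ share the same combinatorial skeleton $M_n(x,y,z)$, the values of $(x,y,z)$ are permuted, so the translation of each row-type into a pair $pd(\cdot)=(a,b)$ (the exponents of $2$ and $3$) is different for the two matrices. For $A_n$ the label $1=2^0 3^0$ contributes nothing, $2=2^1$ contributes to the first coordinate, and $3=3^1$ to the second; hence $pd((A_n)_i)$ reads off the counts $(y_i,z_i)$ of $2$'s and $3$'s. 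For $C_m$, by contrast, $x=3$, $y=1$, $z=2$, so $pd((C_m)_j)$ reads off $(z_j, x_j)$, i.e.\ the number of $2$'s is the $z$-count and the number of $3$'s is the $x$-count.

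First I would write down, using the row-type data tabulated just before the Proposition, the explicit value of $pd(\cdot)=(a,b)$ for each of the three types of each matrix: for $A_n$ these are $(\ceil{\frac{n}{2}}-2,\,1)$, $(i-1,0)$ or $(i-2,0)$, and $(n-3,1)$; for $C_m$, reading off $x$-counts and $z$-counts, the first-coordinate entry $a$ will come from the number of $2$'s (the $z$'s) and the second-coordinate $b$ from the number of $3$'s (the $x$'s). The key structural feature to exploit is that in $C_m$ the label $3$ (the $x$ entry) is by far the most frequent entry in the top rows, so $pd((C_m)_j)[2]$ is large — roughly $m-j$ — whereas every row of $A_n$ has $pd[2]\le 1$ because $A_n$ contains only a single $3$ per row (from the $z$-position) together possibly with one more. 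This immediately kills most of the nine cases by a parity/magnitude argument on the exponent of $3$.

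Concretely I would proceed case by case exactly as in Lemma~\ref{l24}: the cases where the $C_m$-row is of type~2 force $pd((C_m)_j)[2]$ to be large while the $A_n$-row forces it to be at most $1$, a contradiction once $m\ge 4$; the cases pairing type~1 or type~3 rows against each other reduce to small Diophantine equalities of the form $(\ceil{\frac{n}{2}}-2,1)=(\cdots)$ or $(n-3,1)=(\cdots)$, each of which I would solve and check has no solution in the range $n\ge 7$, $m\ge 4$. The hypotheses $n\ge 7$ and $m\ge 4$ are presumably exactly what is needed to rule out the handful of sporadic small solutions (the analogue of the $(4,4),(5,5),(6,6)$ exclusions in Lemma~\ref{l24}); indeed the asymmetric bound $n\ge 7$ versus $m\ge 4$ strongly suggests that the binding case is a type~1-vs-type~1 or type~3-vs-type~1 comparison whose only integer solutions occur for $n\le 6$.

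The main obstacle I anticipate is bookkeeping rather than conceptual depth: because $A_n$ and $C_m$ assign the roles of ``frequent label'' and ``exponent of $3$'' differently, one cannot simply reuse the sign arguments from Lemma~\ref{l24} verbatim, and I would need to recompute each $pd$-pair carefully and double-check the ceiling/floor arithmetic at the boundary values $n=7$ and $m=4$ to confirm that no equality slips through. Once the translation table of the six $pd$-pairs is correct, each of the nine cases collapses to a one-line magnitude or parity contradiction, and assembling them yields the lemma.
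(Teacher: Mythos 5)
Your proposal is correct and takes essentially the same approach as the paper: assume $pd(a_i)=pd(c_j)$ for contradiction, translate the three row-types of $A_n$ and $C_m$ into exponent pairs (your translation table $(y_i,z_i)$ for $A_n$ and $(z_j,x_j)$ for $C_m$ matches the paper's), and dispose of the nine cases by zero-coordinate and magnitude arguments, exactly as the paper does. One minor correction to your aside: the binding case forcing $n\geq 7$ is the type-1-of-$A_n$ versus type-3-of-$C_m$ comparison, where $pd(a_{(1)})=(\lceil n/2\rceil-2,\,1)$ equals $pd(c_{(3)})=(1,1)$ precisely for $n\in\{5,6\}$ independently of $m$, rather than the pairings you guessed, but this does not affect the validity of your plan.
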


\begin{proof}
Suppose contradiction, i.e. $\exists a_i$ and $c_j$ (that are rows of matrices $A_n$ and $C_m$ respectively) for some i and j such that $pd(a_i) = pd(c_j)$. We will use the same type of proof as in the Lemma \ref{l24}.
\begin{enumerate}

\item if $pd(a_{(1)}) = pd(c_{(1)}) \Rightarrow (\ceil{\frac{n}{2}} - 2, 1) = (1, \ceil{\frac{m-1}{2}}) \Rightarrow n = 5$ or $6$ and $m = 2$ or $3$ which is contradiction.
\item if $pd(a_{(1)}) = pd(c_{(2)}) \Rightarrow (\ceil{\frac{n}{2}} - 2, 1) = (0, m - j)$ or $(\ceil{\frac{n}{2}} - 2, 1) = (0, m - j + 1)$. In both cases $n = 3$ or $4$ which is contradiction.
\item if $pd(a_{(1)}) = pd(c_{(3)}) \Rightarrow (\ceil{\frac{n}{2}} - 2, 1) = (1, 1) \Rightarrow n = 5$ or $6$ which is contradiction.
\item if $pd(a_{(2)}) = pd(c_{(1)}) \Rightarrow (i-1, 0) = (1, \ceil{\frac{m-1}{2}})$ or $(i-2, 0) = (1, \ceil{\frac{m-1}{2}})$. In both cases $m = 1$ which is contradiction.
\item for $pd(a_{(2)}) = pd(c_{(2)})$ we have that $pd(a_{(2)})[2] = 0$ and $pd(c_{(2)})[2] > 0$ which is contradiction.
\item if $pd(a_{(2)}) = pd(c_{(3)}) \Rightarrow (i-1, 0) = (1, 1)$ or $(i-2, 0) = (1, 1))$ which is, clearly, contradiction.
\item $pd(a_{(3)}) = pd(c_{(1)}) \Rightarrow (n-3, 1) = (1, \ceil{\frac{m-1}{2}}) \Rightarrow (n,m) = (4,3)$ or $(4,4)$ which is contradiction.
\item $pd(a_{(3)}) = pd(c_{(2)}) \Rightarrow (n-3, 1) = (0, m-j)$ or $(n-3, 1) = (0, m-j+1) \Rightarrow n = 3$ which is contradiction.
\item $pd(a_{(3)}) = pd(c_{(3)}) \Rightarrow (n-3, 1) = (1,1) \Rightarrow n=4$ which is contradiction.
\end{enumerate}
This finishes the proof.
\end{proof}

\begin{lemma} \label{l2_55}
$\forall n \geq m \geq 5$, $B_n \oplus C_m$ is product irregular if $(n,m) \not\in \{ (5,4), (5,5), (6,6) \}$.
\end{lemma}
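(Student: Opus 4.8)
The plan is to reuse the contradiction scheme of Lemmas~\ref{l24} and~\ref{l27_4}. Assuming $B_n \oplus C_m$ is not product irregular, there must be a row $b_i$ of $B_n$ and a row $c_j$ of $C_m$ with $pd(b_i) = pd(c_j)$, because the earlier Proposition already guarantees that any two rows inside a single block $M_n(2,3,1)$ or $M_m(3,1,2)$ have distinct product degrees. So it suffices to rule out the nine cross-block comparisons by row type. First I would record the product degrees as power pairs. In $B_n = M_n(2,3,1)$ the label $2$ carries the exponent of $2$ and the label $3$ the exponent of $3$, giving $pd(b_{(1)}) = (\ceil{\frac{n-1}{2}}, \ceil{\frac{n}{2}}-2)$, $pd(b_{(2)}) \in \{(n-i,i-1),(n-i+1,i-2)\}$ and $pd(b_{(3)}) = (1,n-3)$. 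In $C_m = M_m(3,1,2)$ the roles of $x$ and $z$ are swapped, so $pd(c_{(1)}) = (1, \ceil{\frac{m-1}{2}})$, $pd(c_{(2)}) \in \{(0,m-j),(0,m-j+1)\}$ and $pd(c_{(3)}) = (1,1)$; these are exactly the $C_m$-degrees already used in Lemma~\ref{l27_4}.

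With the pairs in hand, most cases collapse on a single coordinate, and I would dispatch them quickly. Every comparison against a type-$2$ row of $C_m$ fails because $pd(c_{(2)})[1] = 0$ while the first coordinate of any row of $B_n$ is positive; comparisons with $b_{(1)}$ force $\ceil{\frac{n-1}{2}} = 1$, hence $n \le 3$, a contradiction; the comparison $pd(b_{(2)}) = pd(c_{(1)})$ forces the first coordinate to be $1$, which pins the index $i$ to $n-1$ or $n$, values that fall outside the type-$2$ range once $n \ge 5$; the comparison $pd(b_{(2)}) = pd(c_{(3)})$ forces both coordinates to $1$ and hence $n = 3$; and $pd(b_{(3)}) = pd(c_{(3)})$ gives $(1,n-3) = (1,1)$, i.e. $n = 4$. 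In each of these it is enough to exhibit one mismatching coordinate, so no real computation is involved.

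The only genuinely delicate case, and the source of the exceptions, will be $pd(b_{(3)}) = pd(c_{(1)})$, that is $(1,n-3) = (1,\ceil{\frac{m-1}{2}})$. Here the first coordinates already agree, so everything reduces to the equation $n-3 = \ceil{\frac{m-1}{2}}$ under the constraint $n \ge m$. The hard part is to pin down its solutions: splitting on the parity of $m$ shows that $n = 3 + \ceil{\frac{m-1}{2}} \ge m$ can hold only for small $m$, forcing $m \le 6$, after which a direct check of the few remaining values leaves exactly $(n,m) = (5,5)$ and $(6,6)$ (and $(5,4)$ if a block of size $4$ is permitted, which explains its appearance in the exception list). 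Since these are precisely the pairs excluded in the statement, the assumed equality is impossible in every remaining case, and the contradiction completes the proof.
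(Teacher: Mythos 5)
Your proposal is correct and follows essentially the same route as the paper: assume $pd(b_i)=pd(c_j)$, reduce to the nine type-versus-type comparisons using the exponent pairs of $2$ and $3$, and isolate $pd(b_{(3)})=pd(c_{(1)})$, i.e.\ $n-3=\ceil{\frac{m-1}{2}}$, as the unique source of the exceptional pairs $(5,4)$, $(5,5)$, $(6,6)$. Your only deviations are cosmetic improvements: you dispatch all three comparisons against $c_{(2)}$ at once via the first coordinate being $0$, and you state explicitly the appeal to the Proposition for within-block distinctness, which the paper leaves implicit.
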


\begin{proof}
Suppose contradiction, i.e. there exist $b_i$ and $c_j$ (that are rows of matrices $B_n$ and $C_m$ respectively) for some i and j such that $pd(b_i) = pd(c_j)$. We will use the same type of proof as in the Lemma \ref{l24}.
\begin{enumerate}
\item $pd(b_{(1)}) = pd(c_{(1)}) \Rightarrow (\ceil{\frac{n-1}{2}}, \ceil{\frac{n}{2}} - 2) = (1, \ceil{\frac{m-1}{2}}) \Rightarrow n = 2$ or $3$ which is contradiction.
\item $pd(b_{(1)}) = pd(c_{(2)}) \Rightarrow (\ceil{\frac{n-1}{2}}, \ceil{\frac{n}{2}} - 2) = (0, m - j)$ or $(\ceil{\frac{n-1}{2}}, \ceil{\frac{n}{2}} - 2) = (1, m - j + 1)$ which contradicts with $n \geq 7$.
\item $pd(b_{(1)}) = pd(c_{(3)}) \Rightarrow (\ceil{\frac{n-1}{2}}, \ceil{\frac{n}{2}} - 2) = (1,1)$ which is, clearly, contradiction.
\item if $pd(b_{(2)}) = pd(c_{(1)}) \Rightarrow (n-i, i-1) = (1, \ceil{\frac{n-1}{2}})$ or $(n-i+1, i-2) = (1, \ceil{\frac{n-1}{2}})$ which contradicts with $n > i$. 
\item for $pd(b_{(2)}) = pd(c_{(2)})$ we have that $pd(b_{(2)})[1] > 0$ and $pd(c_{(2)})[1] = 0$ which is contradiction.
\item if $pd(b_{(2)}) = pd(c_{(3)}) \Rightarrow (n-i, i-1) = (1,1)$ or $(n-i+1, i-2) = (1,1)$ which contradicts with $n > i$.
\item $pd(b_{(3)}) = pd(c_{(1)}) \Rightarrow (1, n-3) = (1, \ceil{\frac{m-1}{2}}) \Rightarrow m = 2(n-3)$ or $m = 2(n-3) + 1$ which is contradiction because for $n \geq 7$ we have that $m > n$ and for $4 \leq n < 7$ we have that $(n,m) \in \{ (5,4), (5,5), (6,6) \}$.
\item $pd(b_{(3)}) = pd(c_{(2)}) \Rightarrow (1, n-3) = (0, m-j)$ or $(1, n-3) = (0, m-j+1)$ which is contradiction.
\item $pd(b_{(3)}) = pd(c_{(3)}) \Rightarrow (1, n-3) = (1,1) \Rightarrow n=4$ which is contradiction.
\end{enumerate}
This finishes the proof.
\end{proof}

\begin{theorem} \label{t37}
For every n, m and l such that $m \geq l \geq n \geq 7$ $A_n \oplus B_m \oplus C_l$ is product irregular.
\end{theorem}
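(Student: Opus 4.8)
The plan is to reduce the triple direct sum to the three pairwise statements already proved. First I would record the elementary but essential observation underlying the whole section: since every nonzero entry of a row of $A_n\oplus B_m\oplus C_l$ lies inside the block containing that row, the product degree of a row depends only on its own summand. Consequently the triple sum is product irregular if and only if (i) each of $A_n$, $B_m$, $C_l$ is individually product irregular, and (ii) no row of one summand shares its product degree with a row of a different summand. Part (i) is immediate from the Proposition, since $\{1,2,3\}$, $\{2,3,1\}$ and $\{3,1,2\}$ are each sets of pairwise coprime distinct integers.

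Next I would split (ii) into the three cross-comparisons $A_n$ versus $B_m$, $A_n$ versus $C_l$, and $B_m$ versus $C_l$, matching each to the corresponding pairwise lemma. The comparison $A_n$ versus $B_m$ is contained in the product irregularity of $A_n\oplus B_m$, which Lemma \ref{l24} guarantees whenever $m\ge n\ge 4$ and $(n,m)\notin\{(4,4),(5,5),(6,6)\}$; under the hypothesis $m\ge l\ge n\ge 7$ we have $m\ge n\ge 7$, so these conditions hold. The comparison $A_n$ versus $C_l$ is contained in the product irregularity of $A_n\oplus C_l$, which Lemma \ref{l27_4} guarantees for $n\ge 7$ and $l\ge 4$; both hold since $n\ge 7$ and $l\ge n\ge 7$. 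Finally the comparison $B_m$ versus $C_l$ is contained in the product irregularity of $B_m\oplus C_l$, which Lemma \ref{l2_55} guarantees whenever $m\ge l\ge 5$ and $(m,l)\notin\{(5,4),(5,5),(6,6)\}$; again $m\ge l\ge 7$ secures this.

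The only thing to verify carefully is therefore the bookkeeping of indices: one must check that the hypothesis $m\ge l\ge n\ge 7$ places every pair within the valid range of its lemma and, in particular, strictly above the exceptional pairs, all of which involve an index at most $6$. Since every index here is at least $7$, all exceptional cases are excluded automatically. I do not expect any genuine obstacle in the triple case itself; the substantive work was carried out in the three pairwise lemmas, and the present statement is a clean assembly of them. The one point worth stating explicitly is why no new three-way collision can occur — this is exactly the remark in the first paragraph that a row's product degree is computed entirely within its own block, so pairwise distinctness across the three blocks is equivalent to full product irregularity of the direct sum, and the theorem follows.
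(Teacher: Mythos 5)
Your proposal is correct and takes exactly the paper's route: the paper's proof is a one-line citation of Lemmas \ref{l24}, \ref{l27_4} and \ref{l2_55}, which is precisely your reduction to the three pairwise comparisons. The only difference is that you spell out the (correct) implicit observation that a row's product degree is computed within its own block, so pairwise irregularity of the summands assembles into irregularity of the triple direct sum.
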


\begin{proof}
Proof follows by Lemmas \ref{l24}, \ref{l27_4} and \ref{l2_55}.
\end{proof}

\begin{corollary} \label{c37}
For all positive integers $n,m$ and $l$ greater or equal than $7$ it holds that $ps(K_n + K_m + K_l) = 3$.
\end{corollary}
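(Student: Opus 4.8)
The plan is to derive this corollary almost immediately from Theorem~\ref{t37} together with the general lower bound recorded in the introduction. Recall that for every graph $X$ (without isolated vertices or edges) one has $ps(X)\geq 3$, so it suffices to prove the upper bound $ps(K_n+K_m+K_l)\leq 3$, i.e.\ to exhibit a product-irregular labelling $\omega:E(K_n+K_m+K_l)\to\{1,2,3\}$. By the Observation of Section~2, it is equivalent to produce a product-irregular weighted adjacency matrix of $K_n+K_m+K_l$ all of whose nonzero entries lie in $\{1,2,3\}$.

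The key observation is that the disjoint union $K_n+K_m+K_l$ is symmetric in its three summands, so I am free to permute the three clique sizes before assigning them to the matrices $A$, $B$, $C$. First I would sort the three given sizes; since each is at least $7$, I may write the sorted triple so that the largest is fed into the $B$-slot, the middle into the $C$-slot, and the smallest into the $A$-slot, exactly matching the hypothesis $m\geq l\geq n\geq 7$ of Theorem~\ref{t37}. Concretely, if the sorted sizes are $a\leq b\leq c$ (all $\geq 7$), I form $A_a\oplus B_c\oplus C_b$, which satisfies $c\geq b\geq a\geq 7$.

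Now I would invoke Theorem~\ref{t37} to conclude that $A_a\oplus B_c\oplus C_b$ is product-irregular. Since $A_a=M_a(1,2,3)$, $B_c=M_c(2,3,1)$ and $C_b=M_b(3,1,2)$, every nonzero entry of this matrix lies in $\{1,2,3\}$, and the direct-sum structure means it is precisely a weighted adjacency matrix of three vertex-disjoint complete graphs, that is, of $K_n+K_m+K_l$ up to the relabelling of the three cliques. Translating back through the Observation gives a product-irregular labelling with maximum label $3$, so $ps(K_n+K_m+K_l)\leq 3$; combined with the lower bound this yields equality.

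The whole argument is essentially bookkeeping, and the only point requiring care—the ``main obstacle,'' such as it is—is making the ordering match: Theorem~\ref{t37} is stated for a fixed assignment of sizes to $A$, $B$, $C$ under the constraint $m\geq l\geq n$, so I must be explicit that the commutativity of disjoint union lets me sort the sizes and route them to the correct slots. Once that is spelled out, no further computation is needed, since all the casework distinguishing the row types across the three blocks has already been absorbed into Lemmas~\ref{l24}, \ref{l27_4}, and~\ref{l2_55} and hence into Theorem~\ref{t37}.
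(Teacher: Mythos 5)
Your proposal is correct and matches the paper's (implicit) argument exactly: the corollary is intended to follow immediately from Theorem~\ref{t37}, with the direct sum $A_n\oplus B_m\oplus C_l$ serving as a product-irregular weighted adjacency matrix of the disjoint union, combined with the universal lower bound $ps\geq 3$. Your extra care in sorting the three sizes into the $A$-, $B$-, $C$-slots to satisfy $m\geq l\geq n\geq 7$ is a legitimate and correctly handled detail that the paper leaves tacit.
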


\begin{lemma} \label{l366}
For all positive integers $n$ and $m$ greater than $6$ and $k \in \{ 4,5,6 \}$, $ps(K_n + K_m + K_k)=3$.
\end{lemma}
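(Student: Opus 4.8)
The plan is to realise $K_n + K_m + K_k$ by the weighted adjacency matrix $A_{n'} \oplus B_{m'} \oplus C_k$, where $\{n',m'\} = \{n,m\}$ is chosen so that $n' \le m'$, the two large cliques carry $A_{n'}$ and $B_{m'}$, and the small clique of size $k$ carries $C_k := M_k(3,1,2)$. Since each of $A_{n'}$, $B_{m'}$, $C_k$ is product-irregular by the proposition on $M_n(x,y,z)$, the direct sum is product-irregular precisely when, in addition, no row of one summand shares its product degree with a row of another summand; thus the task reduces to the three pairwise conditions on $A_{n'} \oplus B_{m'}$, on $A_{n'} \oplus C_k$, and on $B_{m'} \oplus C_k$. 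Verifying these three produces a product-irregular labelling with labels in $\{1,2,3\}$, and together with the universal lower bound $ps \ge 3$ this yields $ps(K_n + K_m + K_k) = 3$.

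First I would dispatch the two pairwise conditions that hold for every admissible $k$. Because $n', m' \ge 7$ and $m' \ge n'$, the pair $(n',m')$ avoids the exceptional set $\{(4,4),(5,5),(6,6)\}$, so $A_{n'} \oplus B_{m'}$ is product-irregular by Lemma \ref{l24}. Likewise $A_{n'} \oplus C_k$ is product-irregular by Lemma \ref{l27_4}, whose hypotheses $n' \ge 7$ and $k \ge 4$ are both met. It therefore remains only to control the interaction between the $B$-block and the small $C$-block.

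For the pair $B_{m'} \oplus C_k$ I would split on $k$. When $k \in \{5,6\}$, Lemma \ref{l2_55} applies directly: we have $m' \ge 7 > k \ge 5$, and $(m',k)$ lies outside $\{(5,4),(5,5),(6,6)\}$ since $m' \ge 7$, so this pair is product-irregular. The genuinely new case is $k = 4$, where Lemma \ref{l2_55} is unavailable because it requires the $C$-index to be at least $5$; this is the main obstacle. Here I would compute the four row products of $C_4 = M_4(3,1,2)$ directly, obtaining $\{(0,3),(0,2),(1,2),(1,1)\}$, and then check that none of these equals a row product of $B_{m'}$ for $m' \ge 7$. The point is that no row of $B_{m'}$ has $pd[1]=0$, so $(0,3)$ and $(0,2)$ are safe, while the only row of $B_{m'}$ with $pd[1]=1$ is the third-type row, whose product degree is $(1, m'-3)$ with $m'-3 \ge 4$, hence distinct from $(1,1)$ and $(1,2)$. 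This shows $B_{m'} \oplus C_4$ is product-irregular and settles the last of the three pairwise checks.

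With all three pairwise conditions in hand for each $k \in \{4,5,6\}$, the matrix $A_{n'} \oplus B_{m'} \oplus C_k$ is product-irregular, exhibiting a product-irregular labelling of $K_n + K_m + K_k$ using only labels $1,2,3$; since $ps \ge 3$ always holds, we conclude $ps(K_n + K_m + K_k)=3$. I expect the routine part to be the bookkeeping of the $B_{m'}$ row products, and the only nontrivial step to be the explicit $k=4$ computation that substitutes for Lemma \ref{l2_55}.
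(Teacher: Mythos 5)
Your proposal is correct and follows essentially the same route as the paper, whose entire proof is ``let $m \geq n$ and consider $A_n \oplus B_m \oplus C_k$,'' citing Lemma \ref{l24}, Lemma \ref{l27_4} and Lemma \ref{l2_55} for the three pairwise checks. In fact you are slightly more careful than the paper: for $k=4$ the paper invokes Lemma \ref{l2_55} even though its stated hypothesis requires the $C$-index to be at least $5$ (the exceptional pair $(5,4)$ in that lemma suggests $m \geq 4$ was intended), and your direct computation of the row degrees of $C_4$ against those of $B_{m'}$ correctly closes that small gap.
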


\begin{proof}
Let $m \geq n$ and consider matrix $A_n \oplus B_m \oplus C_k$. From Lemmas 1, 4 and 5 we can conclude that this matrix is product-irregular.
\end{proof}

\begin{lemma} \label{l_667}
For all positive integer $n \geq 7$ $ps(K_6 + K_6 + K_n)=3$.
\end{lemma}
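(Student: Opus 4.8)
The plan is to build a product-irregular weighted adjacency matrix of $K_6+K_6+K_n$ as a direct sum of one block per clique, using the structured matrix $B_n$ for the block $K_n$ and the two explicit $6\times 6$ matrices $T_6,\tilde{T_6}$ from \eqref{eq:T6} (the ones used to prove $ps(K_6+K_6)=3$ in Theorem~\ref{tmain1}) for the two blocks $K_6$. Since $\{2,3,1\}$ is a set of distinct pairwise coprime integers, $B_n$ is product-irregular on its own, and the twelve row product degrees of $T_6\oplus\tilde{T_6}$ are pairwise distinct; hence it only remains to check that no row product degree of $B_n$ equals one of these twelve values.

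For that I would use a coordinate-sum bound. Any row of a $6\times 6$ matrix with entries in $\{1,2,3\}$ has exactly five off-diagonal entries, so a product degree $(a,b)$ coming from $T_6$ or $\tilde{T_6}$ always satisfies $a+b\le 5$. For $B_n$, reading off the three row types gives product degrees of coordinate sum $n-1$ for type~$2$, and, using $\ceil{\frac{n-1}{2}}+\ceil{\frac{n}{2}}=n$, coordinate sum $n-2$ for both type~$1$ (namely $(\ceil{\frac{n-1}{2}},\ceil{\frac{n}{2}}-2)$) and type~$3$ (namely $(1,n-3)$); thus every row of $B_n$ has product-degree coordinate sum at least $n-2$. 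When $n\ge 8$ we get $n-2\ge 6>5$, so the two blocks cannot share a product degree and $T_6\oplus\tilde{T_6}\oplus B_n$ is product-irregular, giving $ps(K_6+K_6+K_n)=3$ for all $n\ge 8$.

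The case $n=7$ is the real obstacle, because there the sum bound is exactly tight: the type~$1$ and type~$3$ rows of $B_7$ have product degrees $(3,2)$ and $(1,4)$, each of coordinate sum $5$, and these coincide with the product degrees of the two rows of $\tilde{T_6}$ all of whose entries lie in $\{2,3\}$. To remove this clash I would keep $B_7$ for the $K_7$-block and replace $T_6,\tilde{T_6}$ by a pair of product-irregular $6\times 6$ matrices $P,Q$ whose twelve combined product degrees are pairwise distinct and avoid the two values $(3,2)$ and $(1,4)$; since every $6\times 6$ product degree still has coordinate sum at most $5$, these are the only values that $B_7$ could hit, so $P\oplus Q\oplus B_7$ is then product-irregular (and, as the same sum argument still applies, such a $P,Q$ in fact works for all $n\ge 7$ at once). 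The computational heart of the argument is exhibiting such $P,Q$ explicitly — most simply by perturbing the two offending rows of $\tilde{T_6}$ together with their symmetric counterparts so that their products become, say, $(0,5)$ and $(2,3)$, and then verifying directly that all twelve resulting products stay distinct — and this explicit verification is the step I expect to require the most care.
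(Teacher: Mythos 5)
Your proposal follows the paper's construction exactly where it is sound, and departs from it exactly where it is broken. For $n\ge 8$ you reproduce the paper's argument verbatim: take $T_6\oplus\tilde{T_6}\oplus B_n$ and note that any row of a $6\times 6$ block has product-degree coordinate sum at most $5$, while every row of $B_n$ has sum $n-1$ (type $2$) or $n-2$ (types $1$ and $3$), hence at least $6$. At $n=7$ the paper handles the tight case by asserting that the only problematic degree is $pd(b_{(1)})=(3,2)$ and that $(3,2)$ does not occur among the degrees of $T_6\oplus\tilde{T_6}$ --- but this assertion is false as printed: row $2$ of $\tilde{T_6}$ in \eqref{eq:T6} is $(2,0,2,3,3,2)$ with product $2^3\cdot 3^2$, i.e.\ exactly $(3,2)$, and the paper moreover overlooks the type-$3$ row of $B_7$, whose degree $(1,4)$ collides with row $1$ of $\tilde{T_6}$, namely $(0,2,3,3,3,3)$ with product $2\cdot 3^4$. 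You caught both collisions, which is precisely what the paper's proof fails to do (so either that verification or the matrix $\tilde{T_6}$ itself contains an error). Your proposed repair also genuinely works, and more easily than you feared: the two offending rows of $\tilde{T_6}$ share the symmetric entry in position $(1,2)$, and raising that single entry from $2$ to $3$ simultaneously turns their degrees into your suggested targets $(0,5)$ and $(2,3)$ while leaving rows $3$--$6$ untouched. The modified block then has degrees $(0,5),(2,3),(3,1),(1,3),(0,4),(2,2)$, which together with $T_6$'s degrees $(1,2),(0,1),(3,0),(0,2),(1,0),(1,1)$ form twelve distinct pairs (the former all have sum $\ge 4$, the latter sum $\le 3$); the only sum-$5$ pairs among them, $(0,5)$ and $(2,3)$, avoid $B_7$'s sum-$5$ degrees $(3,2)$ and $(1,4)$, the type-$2$ degrees of $B_7$ have sum $6$, and the sum bound disposes of all $n\ge 8$ as before, with internal distinctness of $B_n$ supplied, as you note, by the pairwise coprimality of $\{2,3,1\}$. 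So the only incompleteness in your write-up --- not exhibiting the pair $P,Q$ explicitly --- is filled by your own one-entry perturbation; with that verification spelled out, your proof is complete, whereas the paper's, as it stands, is not.
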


\begin{proof} 
Consider $T_6 \oplus \tilde{T_6} \oplus B_n$ which is product-irregular because for every row $b$ of matrix $B_n$ $pd(b)[1] + pd(b)[2] \geq 6$ except $n = 7$ for which $pd(b_{(1)}) = (3, 2)$. But $(3, 2)$ does not belong to the set of all product degrees of matrix $T_6 \oplus \tilde{T_6}$.
\end{proof}

\begin{lemma} \label{l3567}
For all positive integer $n \geq 7$ $ps(K_5 + K_6 + K_n)=3$.
\end{lemma}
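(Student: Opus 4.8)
The plan is to follow the template of Lemma~\ref{l_667}, realizing a product-irregular labelling of $K_5+K_6+K_n$ as a direct sum $P\oplus Q\oplus B_n$, where $P$ is a fixed product-irregular $5\times 5$ matrix for the $K_5$-block, $Q$ a fixed product-irregular $6\times 6$ matrix for the $K_6$-block, and using the quantity $pd(v)[1]+pd(v)[2]$ as a separator. The key elementary fact is that for a row $v$ this quantity equals the number of off-diagonal entries of that row different from $1$. Consequently every product degree arising from the $K_5$-block has exponent-sum at most $4$, every product degree from the $K_6$-block has exponent-sum at most $5$, whereas in $B_n$ the type-$2$ rows have exponent-sum $n-1$ and the type-$1$ and type-$3$ rows have exponent-sum $n-2$.

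For the bulk of the range I would take $P:=T_5$ and $Q:=\tilde{T_6}$. A direct computation of the five, respectively six, product degrees shows that those of $T_5$ all have exponent-sum at most $3$, while those of $\tilde{T_6}$ all have exponent-sum at least $4$; since each block is itself product-irregular, $T_5\oplus\tilde{T_6}$ is product-irregular and every one of its product degrees has exponent-sum at most $5$. Now for $n\ge 8$ the argument closes exactly as in Lemma~\ref{l_667}: every row of $B_n$ has exponent-sum at least $n-2\ge 6$, strictly larger than any exponent-sum occurring in $T_5\oplus\tilde{T_6}$, so no row of $B_n$ can share a product degree with the first two blocks. As $B_n$ is product-irregular by the Proposition (since $2,3,1$ are pairwise relatively prime), the whole matrix $T_5\oplus\tilde{T_6}\oplus B_n$ is product-irregular, giving $ps(K_5+K_6+K_n)=3$ for all $n\ge 8$.

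The main obstacle is the boundary case $n=7$, where the clean separation fails. Indeed, the type-$1$ and type-$3$ rows of $B_7$ have exponent-sum only $5$, with product degrees $(3,2)$ and $(1,4)$, and these coincide with two product degrees of $\tilde{T_6}$. Moreover this cannot be repaired simply by raising the threshold: no labelling of $K_7$ from $\{1,2,3\}$ can have all rows of exponent-sum $6$, because such a labelling would use only the entries $2$ and $3$, and then its label-$2$ subgraph would be a simple graph on $7$ vertices with degree sequence $0,1,\dots,6$, which is impossible. Hence $n=7$ genuinely requires a bespoke construction.

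To settle $n=7$ I would keep $P:=T_5$ and $B_7$ but replace $\tilde{T_6}$ by an explicitly exhibited product-irregular $6\times 6$ matrix $Q$ whose six product degrees are disjoint from those of $T_5$, all have exponent-sum at most $4$, and in particular avoid the two values $(3,2)$ and $(1,4)$; one workable target is the degree set $\{(0,2),(2,1),(3,0),(1,3),(0,4),(2,2)\}$, which is realizable by a symmetric $\{1,2,3\}$-matrix (the product of the row-products is the perfect square $2^{8}3^{12}$, and the corresponding label-$2$ and label-$3$ subgraphs have graphical, edge-disjoint degree sequences). With this choice the product degrees of $T_5$ and of $Q$ all have exponent-sum at most $4$, while $B_7$ realizes sum-$5$ degrees only in the two rows $(3,2),(1,4)$, which $Q$ was chosen to avoid; all remaining rows of $B_7$ have exponent-sum $6$. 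Therefore every product degree of $T_5\oplus Q\oplus B_7$ is distinct, completing the last case. The only laborious part of the write-up is verifying product-irregularity of the explicit matrix $Q$ for $n=7$, which reduces to checking that six prescribed product degrees are pairwise distinct.
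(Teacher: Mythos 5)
Your proof is correct, and it follows the same overall template as the paper's (a direct sum $P\oplus Q\oplus B_n$ with the two fixed blocks separated from $B_n$ by the exponent-sum $pd(v)[1]+pd(v)[2]$), but it diverges in the choice of blocks, and this is exactly where the paper is slicker. Instead of pairing $T_5$ with $\tilde{T_6}$, the paper pairs $T_5$ with the $6\times 6$ matrix obtained from $\tilde{T_5}$ by appending a row and column with entries $(1,1,1,2,1)$; a direct computation shows the eleven product degrees of this pair are $(3,0),(1,3),(2,2),(3,1),(0,2),(1,0)$ and $(0,1),(1,2),(0,0),(1,1),(2,0)$, all pairwise distinct and all of exponent-sum at most $4$. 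Since every row of $B_n$ has exponent-sum at least $n-2\geq 5$ for $n\geq 7$, one uniform inequality handles the whole range, including the sum-$5$ rows $(3,2)$ and $(1,4)$ of $B_7$ that, as you correctly observe, collide with two product degrees of $\tilde{T_6}$ and force your separate treatment of $n=7$. Your bespoke case does close: the target degree set $\{(0,2),(2,1),(3,0),(1,3),(0,4),(2,2)\}$ you prescribe for $Q$ is genuinely realizable (your parenthetical argument alone does not prove edge-disjoint realizability, so the explicit matrix must be exhibited), for instance by
\begin{equation*}
Q=\begin{pmatrix}
0&1&1&3&3&1\\
1&0&2&1&3&2\\
1&2&0&2&1&2\\
3&1&2&0&3&3\\
3&3&1&3&0&3\\
1&2&2&3&3&0
\end{pmatrix},
\end{equation*}
whose rows have product degrees $(0,2),(2,1),(3,0),(1,3),(0,4),(2,2)$; these avoid $(3,2)$ and $(1,4)$, are disjoint from the degrees of $T_5$, and have exponent-sum at most $4$, while the remaining (type-$2$) rows of $B_7$ have exponent-sum $6$. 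So both proofs are valid: yours buys reuse of matrices already displayed in the paper ($T_5$ and $\tilde{T_6}$) plus a nice structural remark that no $\{1,2,3\}$-labelling of $K_7$ can have all exponent-sums equal to $6$ (explaining why the boundary case is unavoidable with your blocks), whereas the paper's choice of a low-exponent-sum $6\times 6$ block buys a single exception-free argument for all $n\geq 7$, exactly as in Lemma~\ref{l_667}.
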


\begin{proof}
Consider the following matrix:
\begin{equation}
\begin{pmatrix}
0 & 2 & 2 & 2 & 1 & 1 \\
2 & 0 & 3 & 3 & 3 & 1 \\
2 & 3 & 0 & 2 & 3 & 1 \\
2 & 3 & 2 & 0 & 1 & 2 \\
1 & 3 & 3 & 1 & 0 & 1 \\
1 & 1 & 1 & 2 & 1 & 0
\end{pmatrix}
\oplus
\begin{pmatrix}
0 & 3 & 1 & 1 & 1 \\
3 & 0 & 1 & 3 & 2 \\
1 & 1 & 0 & 1 & 1 \\
1 & 3 & 1 & 0 & 2 \\
1 & 2 & 1 & 2 & 0
\end{pmatrix}
\oplus B_n
\end{equation}
is product-irregular because of the same proof as in Lemma \ref{l_667}.
\end{proof}

\begin{lemma} \label{l3556}
For all positive integers $n \geq 6$, $ps(K_5 + K_5 + K_n)=3$.
\end{lemma}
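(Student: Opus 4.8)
The plan is to follow the direct-sum strategy of Lemmas~\ref{l_667} and~\ref{l3567}, reusing for the two copies of $K_5$ the pair $T_5,\tilde{T_5}$ from~\eqref{eq:T5} (which already witnessed $ps(K_5+K_5)=3$). A direct computation gives that the product degrees of $T_5$ are $(0,0),(0,1),(1,1),(2,0),(1,2)$ and those of $\tilde{T_5}$ are $(3,0),(0,2),(2,1),(2,2),(1,3)$; these ten pairs are distinct, so $T_5\oplus\tilde{T_5}$ is product-irregular, and every one of them has coordinate-sum at most $4$.

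For $n\ge 7$ I would take $T_5\oplus\tilde{T_5}\oplus B_n$. Since the weights $\{1,2,3\}$ are pairwise coprime, $B_n$ is product-irregular, and the two small blocks are product-irregular by the previous paragraph; hence it remains only to rule out a row of $B_n$ agreeing with a row of $T_5\oplus\tilde{T_5}$. For this I would invoke the coordinate-sum bound used in Lemma~\ref{l_667}: a row of $B_n$ contains at most one entry equal to $z=1$, so $pd(b)[1]+pd(b)[2]\ge n-2\ge 5$ for every row $b$ of $B_n$, whereas every product degree of $T_5\oplus\tilde{T_5}$ has coordinate-sum at most $4$. No cross-collision is possible, so the direct sum is product-irregular, giving $ps(K_5+K_5+K_n)=3$ for all $n\ge 7$.

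The remaining value $n=6$ is exactly where the argument is delicate, and I expect it to be the main obstacle. For $B_6$ the rows of type~$1$ and type~$3$ have coordinate-sum $n-2=4$, matching the largest coordinate-sum occurring in $T_5\oplus\tilde{T_5}$; worse, the type-$3$ row of $B_6$ has product degree $(1,n-3)=(1,3)$, which coincides with the second row of $\tilde{T_5}$, so $T_5\oplus\tilde{T_5}\oplus B_6$ is \emph{not} product-irregular. To settle $n=6$ I would therefore exhibit an explicit product-irregular weighted adjacency matrix of $K_5+K_5+K_6$ with entries in $\{1,2,3\}$, for instance by keeping $T_5\oplus\tilde{T_5}$ and replacing $B_6$ with a product-irregular $6\times 6$ matrix whose six degrees avoid the ten pairs already used and are mutually distinct (equivalently, by perturbing $\tilde{T_5}$ to a product-irregular $5\times 5$ matrix whose degree set omits $(1,3)$). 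Such a matrix is readily found by a short search; verifying that its sixteen product degrees are pairwise distinct then completes the case $n=6$, and together with the range $n\ge 7$ this proves $ps(K_5+K_5+K_n)=3$ for all $n\ge 6$, the bound $ps\ge 3$ being automatic.
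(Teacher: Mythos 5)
Your proposal is correct and follows the paper's proof essentially verbatim: for $n\ge 7$ the paper also takes $T_5\oplus\tilde{T_5}\oplus B_n$ and rules out cross-collisions by the same coordinate-sum bound ($pd(b)[1]+pd(b)[2]\ge n-2\ge 5$ for every row $b$ of $B_n$, versus sum at most $4$ for the ten degrees of $T_5\oplus\tilde{T_5}$), and for $n=6$ it does exactly what you propose, replacing $B_6$ by an explicit product-irregular matrix $P_6$ whose six product degrees $(4,0),(4,1),(3,2),(3,1),(2,3),(0,3)$ are pairwise distinct and avoid the ten pairs you listed. Your diagnosis of why $n=6$ fails (the collision of the type-$3$ row degree $(1,3)$ of $B_6$ with the second row of $\tilde{T_5}$) matches the situation precisely; the only thing missing for a self-contained proof is to exhibit such a $6\times 6$ matrix rather than appeal to a search, and the paper's $P_6$ is exactly that witness.
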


\begin{proof}
Consider weighted adjacency matrices $T_5$ and $\tilde{T_5}$ from \eqref{eq:T5} in the first item of the proof of Theorem 1:
\begin{enumerate}
\item $\forall n \geq 7$ we have $T_5 \oplus \tilde{T_5} \oplus B_n$ is product irregular because for every row $b$ of matrix $B_n$ $pd(b)[1] + pd(b)[2] \geq 5$.
\item For $n=6$ we have that $T_5 \oplus \tilde{T_5} \oplus P_6$ is product-irregular, where
\begin{equation}
P_6 :=
\begin{pmatrix}
0 & 2 & 2 & 2 & 2 & 1 \\
2 & 0 & 2 & 2 & 2 & 3 \\
2 & 2 & 0 & 2 & 3 & 3 \\
2 & 2 & 2 & 0 & 3 & 1 \\
2 & 2 & 3 & 3 & 0 & 3 \\
1 & 3 & 3 & 1 & 3 & 0
\end{pmatrix}.
\end{equation}
\end{enumerate}
This finishes the proof.
\end{proof}

It can be proved that $ps(K_5 + K_5 + K_4) = 4$. There are a lot of graphs of the form $K_n + K_m + K_k$ for some integers $n$, $m$ and $k$ with product irregularity strength greater than $3$. But since such graphs are disconnected, we will define operation of adding $2$ edges between components of these graphs such that the resulting graph will be connected, i.e. we will consider minimal connected graphs with clique cover number $3$.

\begin{definition}[$+2edges$]
Let $+2dges$ for graphs $G_1 + G_2 + G_3$ be the operation of adding edges, i.e. applying two times $+edge$ between any 2 different pairs of different sets $V(G_1), V(G_2)$ and $V(G_3)$. We will use the following notation for that operation: $G_1 + G_2 + G_3 + 2edges$.
\end{definition}

Now we will describe this operation using matrix language. Consider weighted adjacency matrices $A,B,C$ of sizes $n \times n$, $m \times m$ and $l \times l$ respectively. Let $T_{12}(A,B,C,i,j,w)$ be $(n+m+l) \times (n+m+l)$ matrix with all zeros except elements with coordinates $(i,n+j)$ and $(n+j, i)$ of value $w$. In a similar way we can define matrices $T_{13}(A,B,C,i,j,w)$ and $T_{23}(A,B,C,i,j,w)$ for which coordinates of non-zero elements are $(i, n+m+j)$ and $(n+m+j,i)$ and $(n+i, n+m+j)$ and $(n+m+j, n+i)$ respectively.

For example one of the weighted adjacency matrices for graph $K_n + K_m + K_l + 2edges$ where the edges between cliques are between vertices $a_i$ and $b_j$ of weight $w_1$ and between vertices $b_j$ and $c_k$ of weight $w_2$ where $a_i \in V(K_n), b_j \in V(K_m)$ and $c_k \in V(K_l)$ is $A_n \oplus B_m \oplus C_l + T_{12}(A_n,B_m,C_l,i,j,w_1) + T_{23}(A_n,B_m,C_l,j,k,w_2)$.

\begin{definition}[In-degree and in-edges]
Consider graph $G := G_1 + G_2 + G_3 +2edges$. Let $G^\prime := G_1 + G_2 + G_3$ be subgraph of the graph $G$. Let $g \in V(G)$ and let $d_0(g)$ to be the degree of vertex $g \in V(G^\prime)$. Then define \textit{in-degree} of vertex $g \in V(G)$ to be $d^+(g) := d(g) - d_0(g)$. We say that for some $i \in \{ 1,2,3 \}$ $G_i$ has $t$ \textit{in-edges} if and only if $$\sum_{g \in V(G_i)}{d^+(g)} = t.$$
\end{definition}

For the next theorem we will define the following matrix. Let $\tilde{M}_n(x,y):= M_n(x,y,y)$ and matrices $\tilde{A}_n, \tilde{B}_n$ and $\tilde{C}_n$ to be $\tilde{M}_n(1,2), \tilde{M}_n(2,3)$ and $\tilde{M}_n(3,1)$ respectively.

\begin{theorem} \label{t35e}
For all positive integers n, m and l that are greater or equal than 5 we have that $ps(K_n + K_m + K_l + 2edges) = 3$.
\end{theorem}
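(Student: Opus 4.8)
The plan is to build the required labelling as the base matrix $\tilde A_n \oplus \tilde B_m \oplus \tilde C_l$ modified by the two connecting edges, using the edges only to repair the few coincidences that the tilde--matrices still carry. First I would record the product degrees of the three patterns. Since $\tilde A_n = M_n(1,2,2)$ has no entry $3$, every row $a$ satisfies $pd(a)[2]=0$, so its degrees lie on the ``$2$-axis''; dually every row of $\tilde C_l = M_l(3,1,1)$ has $pd[1]=0$ and lies on the ``$3$-axis'', while every row of $\tilde B_m=M_m(2,3,3)$ has $pd[1]+pd[2]=m-1$ and so lies on one antidiagonal. Running the three row-types through the same count used for $M_n(x,y,z)$ shows each of these matrices is product-irregular \emph{except} for a single coincidence, where the type-$1$ row $m_{(1)}$ meets the type-$2$ row of index $k-1$: the repeated value is $(\ceil{n/2}-1,0)$ for $\tilde A_n$, the value $(\floor{m/2},\ceil{m/2}-1)$ for $\tilde B_m$, and $(0,\floor{l/2})$ for $\tilde C_l$.

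Next I would rule out coincidences \emph{between} blocks, which can only occur at a point lying on two of the three lines. The two axes meet only at the origin $(0,0)$, attained by the first row of $\tilde A_n$ but never by $\tilde C_l$ (all of whose rows have $pd[2]\ge 1$); the antidiagonal meets the $3$-axis only where $pd[1]=0$, which $\tilde B_m$ never attains; and it meets the $2$-axis only at the point $(m-1,0)$, attained by the first row of $\tilde B_m$ and by a row of $\tilde A_n$ \emph{precisely when} $n-2\ge m-1$. Hence, if one assigns the pattern $\tilde A$ to the smallest of the three cliques and $\tilde B,\tilde C$ to the other two, then $n\le m$ kills this last possibility, and the only equal pairs left are the three internal ones, one per clique.

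Now the two edges. As the graph is connected, the edges join the cliques in a path, so one \emph{middle} clique receives both edge-ends while the two outer cliques receive one each; up to the assignment above this leaves a finite list of configurations, indexed by which of $\tilde A,\tilde B,\tilde C$ sits on the middle clique and by whether the two middle ends coincide at one vertex or sit at two distinct vertices. I would attach every edge-end landing on an outer clique to that clique's doubled (type-$1$) row and choose the two edge-labels from $\{2,3\}$ so that each perturbed row leaves its own line: multiplying an $\tilde A$-row by $3$ sends $(\ceil{n/2}-1,0)\mapsto(\ceil{n/2}-1,1)$, multiplying a $\tilde C$-row by $2$ sends $(0,\floor{l/2})\mapsto(1,\floor{l/2})$, and the middle row is pushed off its antidiagonal (its level jumps from $m-1$ to $m$ or $m+1$). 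Because the displaced point leaves the line on which its whole block lives, it can no longer equal any unperturbed row of its own block, so all three internal coincidences are resolved; in the two-distinct-vertices subcase the second middle end may be attached to any non-colliding $\tilde B$-row (we are free to permute the pattern within the clique), so its displaced image is easily kept off every occupied line.

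The remaining task, and the main obstacle, is to check that the at most four perturbed rows collide neither with each other nor with the other blocks. The only genuinely dangerous event is a perturbed outer row landing on the dense antidiagonal of the $\tilde B$-clique: the image $(\ceil{n/2}-1,1)$ has level $\ceil{n/2}$ and the image $(1,\floor{l/2})$ has level $\floor{l/2}+1$, so a clash with level $m-1$ can happen only in the boundary relations $m=\ceil{n/2}+1$ or $m=\floor{l/2}+2$. Since $\tilde A$ is placed on the smallest clique ($n\le m$) the first relation forces $n\le 3$ and is vacuous, while the second (where the image meets the type-$3$ row $(1,m-2)$ of $\tilde B_m$) I would dispose of by interchanging the roles of $\tilde B$ and $\tilde C$, which replaces it by $l=\floor{m/2}+2$. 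The only delicate point is then the short size computation showing that, for clique sizes at least $5$, the two boundary relations cannot hold simultaneously, so one of the two assignments always avoids the antidiagonal; this is precisely the kind of explicit check carried out in Lemmas~\ref{l27_4} and \ref{l2_55}.
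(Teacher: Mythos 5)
Your computations on the tilde matrices are correct and verifiable: each of $\tilde{A}_n$, $\tilde{B}_m$, $\tilde{C}_l$ is product-irregular except for exactly one repeated product degree (the type-$1$ row against the type-$2$ row of index $k-1$, with the values you state), the three blocks live on the $2$-axis, the antidiagonal of level $m-1$, and the $3$-axis respectively, and placing $\tilde{A}$ on a clique no larger than $\tilde{B}$'s kills the unique cross-block intersection point $(m-1,0)$. This is a genuinely different route from the paper, which uses the two bridge edges only for $(5,5,5)$ (labelling them $1$ elsewhere and invoking the disjoint-union lemmas, plus explicit matrices for $(6,6,6)$ and $(5,6,6)$), whereas you use them uniformly to repair the three internal coincidences. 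For the configuration where $\tilde{B}$ sits on the middle clique your repair works, and it is essentially the mechanism the paper itself uses in its $(5,5,5)$ case.

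The gap is that the middle clique is dictated by the graph --- the theorem quantifies over all placements of the two edges --- and your repair and your ``only genuinely dangerous event'' analysis are written solely for the $\tilde{B}$-middle configuration. Under your own constraint (pattern $\tilde{A}$ on a clique of size at most that of $\tilde{B}$'s clique) the $\tilde{B}$-middle assignment can be impossible: for sizes $(5,6,6)$ with the $5$-clique in the middle, putting $\tilde{B}$ on the $5$-clique would force $\tilde{A}$ onto a $6$-clique, violating the constraint, so you must handle an $\tilde{A}$-middle or $\tilde{C}$-middle configuration, which you never do. These cases behave differently: the doubled middle row is multiplied by \emph{two} labels, and the choices interact. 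With $\tilde{C}$ in the middle, the label on the $A$-side edge is forced to be $3$ (label $2$ would move $\tilde{A}$'s doubled row $(\lceil n/2\rceil-1,0)$ to $(\lceil n/2\rceil,0)$, still on the $2$-axis and already occupied), while label $3$ applied to $\tilde{C}$'s doubled row $(0,\lfloor l/2\rfloor)$ keeps it on the $3$-axis where it collides with another $\tilde{C}$-row; in the two-distinct-vertices subcase one is then forced to attach the label-$3$ end at a \emph{non-doubled} $\tilde{C}$-row such as $(0,l-1)$, contrary to your attachment rule. Similarly, an $\tilde{A}$-middle doubled row $(2,0)$ with $n=m=5$ lands on $(4,0)$, $(3,1)$ or $(2,2)$ under the three possible label products $4,6,9$, all of which are occupied by rows of $\tilde{B}_5$, so no labels work and one must re-assign patterns --- a move your argument does not make. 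Finally, your ``interchange $\tilde{B}$ and $\tilde{C}$'' step moves $\tilde{C}$ onto the middle clique, so the second boundary relation is not $l=\lfloor m/2\rfloor+2$ as claimed but the one arising from a doubly-perturbed $\tilde{C}$-row, and the pairwise collisions among the up to four perturbed rows (e.g.\ a perturbed $\tilde{C}$-row at level $\lfloor l/2\rfloor+1=m$ against a perturbed $\tilde{B}$-row at level $m$) are asserted rather than checked. The skeleton is salvageable, but completing it requires an explicit case analysis over which pattern occupies the middle clique, with forced label choices reconciled in each case; as it stands the proof covers only part of the configurations the theorem asserts.
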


\begin{proof} Consider some cases that were not covered by previous Lemmas:
\begin{enumerate}
\item For $(n,m,l) = (6,6,6)$ consider the following matrix:
\begin{equation} \label{eq:M666}
\begin{pmatrix}
0 & 1 & 1 & 1 & 1 & 1 \\
1 & 0 & 3 & 1 & 1 & 2 \\
1 & 3 & 0 & 1 & 2 & 2 \\
1 & 1 & 1 & 0 & 2 & 2 \\
1 & 1 & 2 & 2 & 0 & 2 \\
1 & 2 & 2 & 2 & 2 & 0
\end{pmatrix}
\oplus
\begin{pmatrix}
0 & 2 & 2 & 2 & 2 & 2 \\
2 & 0 & 1 & 2 & 2 & 3 \\
2 & 1 & 0 & 2 & 3 & 3 \\
2 & 2 & 2 & 0 & 3 & 3 \\
2 & 2 & 3 & 3 & 0 & 3 \\
2 & 3 & 3 & 3 & 3 & 0
\end{pmatrix}
\oplus
\begin{pmatrix}
0 & 3 & 3 & 3 & 3 & 3 \\
3 & 0 & 2 & 3 & 3 & 1 \\
3 & 2 & 0 & 3 & 1 & 1 \\
3 & 3 & 3 & 0 & 1 & 1 \\
3 & 3 & 1 & 1 & 0 & 1 \\
3 & 1 & 1 & 1 & 1 & 0
\end{pmatrix}
\end{equation}
which is product-irregular.
\item For $(n,m,l) = (5,6,6)$ we can consider the same matrix as in \eqref{eq:M666} without first row (and column), i.e. without row (and column) $v$ such that $pd(v) = (0,0)$.
\end{enumerate}
For $(n,m,l) = (5,5,5)$ we will consider $\tilde{A}_5 \oplus \tilde{B}_5 \oplus \tilde{C}_5 +2edges$. Let $\tilde{B}_5$ to have $2$ in-edges, then we have:
\begin{enumerate}
\item[(1)] If $\tilde{B}_5$ has $2$ in-edges from one vertex, then we can take weighted adjacency matrix $\tilde{A}_5 \oplus \tilde{B}_5 \oplus \tilde{C}_5 + T_{12}(\tilde{A}_5, \tilde{B}_5, \tilde{C}_5, 3, 3, 3) + T_{23}(\tilde{A}_5, \tilde{B}_5, \tilde{C}_5, 3, 3, 2)$ which is product-irregular.
\item[(2)] If $\tilde{B}_5$ has $2$ in-edges from different vertices then we can take weighted adjacency matrix $\tilde{A}_5 \oplus \tilde{B}_5 \oplus \tilde{C}_5 + T_{12}(\tilde{A}_5, \tilde{B}_5, \tilde{C}_5, 3, 3, 3) + T_{23}(\tilde{A}_5, \tilde{B}_5, \tilde{C}_5, 1, 3, 2)$ which is product-irregular.
\end{enumerate}
The proof now follows by the above argumentation, together with Theorem \ref{t37} and Lemmas \ref{l366}, \ref{l_667}, \ref{l3567} and \ref{l3556}.
\end{proof}

\begin{lemma} \label{l3475}
For all positive integers $n \geq 7$ and $m \in \{ 5,6 \}$ we have that $ps(K_4 + K_n + K_m) = 3$.
\end{lemma}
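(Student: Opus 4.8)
The plan is to realise $K_4+K_n+K_m$ by a block-diagonal weighted adjacency matrix, i.e.\ a direct sum of a product-irregular matrix for each clique, and to guarantee that product degrees never repeat across blocks by controlling the coordinate sum $pd(v)[1]+pd(v)[2]$ of each row. The basic observation I would record first is a weight estimate: in $B_t$ every row has exactly $t-1$ off-diagonal entries, of which at most one (the single entry equal to the parameter $z=1$) contributes $0$ to the exponent pair, so each row of $B_t$ has coordinate sum at least $t-2$. Dually, $A_4$ uses the label $1$ for its parameter $x$, and a direct check gives its four product degrees as $(0,0),(1,0),(0,1),(1,1)$, all of coordinate sum at most $2$. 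Since a direct sum is product-irregular precisely when each summand is (which holds by the Proposition) and no product degree is shared between two summands, it suffices to separate the blocks by their coordinate-sum ranges.

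For $m=5$ I would take $A_4\oplus B_5\oplus B_n$. Here the coordinate sums of $A_4$ lie in $\{0,1,2\}$, those of $B_5$ in $\{3,4\}$, and those of $B_n$ (with $n\geq 7$) are at least $n-2\geq 5$; the three ranges are pairwise disjoint, so no collision can occur and the matrix is product-irregular for every $n\geq 7$. For $m=6$ and $n\geq 8$ the same idea works with $A_4\oplus B_6\oplus B_n$: the sums of $B_6$ lie in $\{4,5\}$ while those of $B_n$ are at least $n-2\geq 6$, and $A_4$ still sits at sum at most $2$, so again the ranges are disjoint and the matrix is product-irregular.

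The one case where this separation breaks down --- and which I expect to be the main obstacle --- is $(n,m)=(7,6)$, because $B_6$ and $B_7$ both possess rows of coordinate sum $5$ and in fact share the product degree $(3,2)$, so $B_6\oplus B_7$ is not product-irregular. To handle $K_4+K_6+K_7$ I would keep $A_4$ and $B_7$ but replace $B_6$ by an explicit product-irregular $K_6$ matrix whose six product degrees avoid the four values of $A_4$ and the only two product degrees of $B_7$ of coordinate sum at most $5$, namely $(3,2)$ and $(1,4)$ (every other row of $B_7$ has coordinate sum $6$, which a vertex of $K_6$, having only five incident edges, cannot attain). Concretely, I would verify that the $K_6$ matrix
\begin{equation}
M := \begin{pmatrix}
0&2&2&2&2&2\\
2&0&2&2&2&3\\
2&2&0&1&3&3\\
2&2&1&0&3&1\\
2&2&3&3&0&3\\
2&3&3&1&3&0
\end{pmatrix}
\end{equation}
has product degrees $(5,0),(4,1),(2,2),(2,1),(2,3),(1,3)$, which are pairwise distinct and disjoint both from $\{(0,0),(1,0),(0,1),(1,1)\}$ and from the product-degree set of $B_7$; hence $A_4\oplus M\oplus B_7$ is product-irregular, settling the remaining case. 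Since every label used lies in $\{1,2,3\}$ and the general lower bound gives $ps\geq 3$, this yields $ps(K_4+K_n+K_m)=3$ for all $n\geq 7$ and $m\in\{5,6\}$.
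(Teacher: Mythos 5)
Your proof is correct; I checked the computations. The product degrees of $A_4$ are indeed $(0,0),(1,0),(0,1),(1,1)$; every row of $B_t$ has coordinate sum $t-2$ (types 1 and 3) or $t-1$ (type 2), so your separation of ranges $\{0,1,2\}$, $\{3,4\}$, $\{\geq 5\}$ for $A_4\oplus B_5\oplus B_n$ and $\{0,1,2\}$, $\{4,5\}$, $\{\geq 6\}$ for $A_4\oplus B_6\oplus B_n$ ($n\geq 8$) is valid; and your explicit $K_6$ matrix is symmetric with entries in $\{1,2,3\}$, has exactly the six pairwise distinct product degrees you list, and these avoid both the four $A_4$ values and the only two product degrees of $B_7$ with coordinate sum at most $5$, namely $(3,2)$ and $(1,4)$. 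While the skeleton (direct sums of weighted clique matrices compared via exponent pairs) is the same as the paper's, your concrete choices genuinely differ. For $m=5$ the paper takes $A_4\oplus B_n\oplus\tilde{T_5}$, reusing the ad hoc matrix $\tilde{T_5}$ from \eqref{eq:T5}, whereas you show the generic block $B_5$ already works; for $(n,m)=(7,6)$ the paper takes $A_4\oplus B_7\oplus\tilde{T_6}$ with $\tilde{T_6}$ from \eqref{eq:T6}, whereas you construct a fresh $K_6$ matrix tailored to dodge $(3,2)$ and $(1,4)$. Your organizing principle buys something the paper's write-up lacks: for $m=6$, $n\geq 8$ the paper justifies product-irregularity of $A_4\oplus B_6\oplus B_n$ by citing Theorem \ref{tmain1}, which concerns two cliques joined by an edge and does not literally cover a $B_6\oplus B_n$ direct sum; your coordinate-sum argument supplies the actual reason. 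You also correctly isolated the one obstruction, the shared degree $(3,2)$ of $B_6$ and $B_7$, which is precisely why a special block is needed at $(7,6)$, and your observation that a vertex of $K_6$ has only five incident edges (so coordinate sum at most $5$) reduces the collision check against $B_7$ to two values — a cleaner and more transparent verification than an unexplained explicit matrix.
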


\begin{proof}
Consider three different cases for different $m$:
\begin{enumerate}
\item For $m = 6$ and $n \geq 8$ consider matrix $A_4 \oplus B_6 \oplus B_n$ which is product-irregular using Theorem \ref{tmain1}.
\item For $m = 6$ and $n = 7$ consider matrix $A_4 \oplus B_7 \oplus \tilde{T_6}$ which is product-irregular (where $\tilde{T_6}$ is defined in \eqref{eq:T6}).
\item For $m = 5$ consider matrix $A_4 \oplus B_n \oplus \tilde{T_5}$ which is product-irregular (where $\tilde{T_5}$ is defined in \eqref{eq:T5}).
\end{enumerate}

\end{proof}

\begin{theorem} \label{tmain2}
For all positive integers n, m and l that are greater or equal than $4$ we have that $ps(K_n + K_m + K_l +2edges) = 3$.
\end{theorem}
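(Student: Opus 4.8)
The plan is to prove only the upper bound $ps \le 3$, since every admissible graph satisfies $ps \ge 3$ by the remark in the introduction; so the whole task is to produce, for each triple $(n,m,l)$ with $n,m,l \ge 4$ and each legal placement of the two connecting edges, a product-irregular weighted adjacency matrix over $\{0,1,2,3\}$. The first move is to peel off the case $n,m,l \ge 5$, which is exactly Theorem \ref{t35e}. So from now on I assume $\min\{n,m,l\} = 4$ and, after relabelling the cliques, that one block is a $K_4$; I write the multiset of sizes as $\{4,p,q\}$ with $4 \le p \le q$.

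The second move is a monotonicity reduction. Since $K_n+K_m+K_l$ is a spanning subgraph of $K_n+K_m+K_l+2edges$, assigning the label $1$ to the two new edges turns any product-irregular labelling of the disconnected graph into one of the connected graph without changing a single product degree; thus $ps(K_n+K_m+K_l+2edges) \le ps(K_n+K_m+K_l)$, in the spirit of \cite{DH15}. Hence every triple for which the disconnected graph already has strength $3$ is settled. By Lemma \ref{l366} (the $K_4$ playing the role of the $\{4,5,6\}$ block) this handles $\{4,p,q\}$ with $p,q \ge 7$, and by Lemma \ref{l3475} it handles $\{4,p,q\}$ with $p \in \{5,6\}$ and $q \ge 7$. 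The residual triples are exactly those in which both $p$ and $q$ are at most $6$, or in which a second block is a $K_4$: the six small triples $(4,4,4),(4,4,5),(4,4,6),(4,5,5),(4,5,6),(4,6,6)$ together with the infinite family $(4,4,q)$ for $q \ge 7$. For these the disconnected graph can genuinely fail (for instance $ps(K_4+K_5+K_5)=4$ and $ps(K_4+K_4)>3$), so the two extra edges must now carry labels from $\{2,3\}$.

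For the six small triples I would simply display product-irregular matrices for the connected graph, in the style of those given in Theorem \ref{t35e} and Lemma \ref{l3556}, realising the two edges through the operators $T_{ij}$ with weights chosen so that each $K_4$ block becomes internally product-irregular and the three blocks stay mutually separated. For the family $(4,4,q)$ with $q \ge 7$ I would take the large clique as $B_q$ and the two small cliques as fixed $4 \times 4$ blocks, joined by the two in-edges. The separation argument follows Lemma \ref{l_667}: every row $b$ of $B_q$ has $pd(b)[1]+pd(b)[2] \ge q-2 \ge 5$, whereas the two $K_4$ blocks can be arranged to have small coordinate sums, so they are separated from $B_q$ up to the finitely many sum-$5$ rows $(3,2)$ and $(1,4)$ of $B_7$, which are checked by hand exactly as in Lemma \ref{l_667}; the two in-edges then supply the extra factors needed to make the two $K_4$ blocks internally product-irregular and mutually distinct.

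The heart of the difficulty is the rigidity of $K_4$: each of its vertices meets only three internal edges, so without help its product degree is confined to the handful of pairs $(a,b)$ with $a+b \le 3$, and the criterion recalled before Theorem \ref{tmain1} (a $4 \times 4$ weighted matrix is product-irregular iff some row has $pd=(1,1)$) shows that two disjoint $K_4$'s cannot be separated with labels in $\{1,2,3\}$ at all. Everything therefore hinges on routing and weighting the two in-edges so that simultaneously each $K_4$ gains enough distinct internal products, the two $K_4$ blocks avoid each other, and neither meets the large block. Because the two edges form a path through a middle clique, I must run through every placement — the two in-edges at a common vertex versus at distinct vertices, and each choice of which clique is the middle one — exactly as the $(5,5,5)$ analysis in Theorem \ref{t35e} already splits into its \emph{same-vertex} and \emph{different-vertices} subcases; the subcase in which a $K_4$ is the middle clique, so that it carries both in-edges, is the delicate one, since a $K_4$ row can then reach coordinate sum $5$ and must be checked directly against the borderline rows of $B_7$. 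Once the placements are organised this way, verifying each explicit matrix and the $(4,4,q)$ family is mechanical.
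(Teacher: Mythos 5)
Your scaffolding is sound and in places sharper than the paper's own. The monotonicity step (label the two added edges $1$, leaving all product degrees unchanged, so $ps(K_n+K_m+K_l+2edges)\le ps(K_n+K_m+K_l)$) is valid and is the same mechanism as \cite[Lemma 1]{DH15}, which the paper uses implicitly when it settles $(4,5,6)$ by a disconnected matrix. Your dispatch of $\min\{n,m,l\}\ge 5$ via Theorem \ref{t35e}, of $\{4,p,q\}$ with $p,q\ge 7$ via Lemma \ref{l366}, and of $p\in\{5,6\}$, $q\ge 7$ via Lemma \ref{l3475} matches the paper, and your enumeration of the residual triples --- the six small ones together with the infinite family $(4,4,q)$, $q\ge 7$ --- is exactly right; indeed it is more complete than the paper's written conclusion, which exhibits constructions only for $(4,5,6)$, $(4,5,5)$, $(4,4,5)$ and $(4,4,4)$ and does not explicitly address $(4,4,6)$, $(4,6,6)$ or $(4,4,q)$. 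Your computation for $B_q$ (row sums $pd(b)[1]+pd(b)[2]\ge q-2$, with borderline rows $(3,2)$ and $(1,4)$ in $B_7$) is also correct and parallels Lemma \ref{l_667}.

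The genuine gap is that the constructive core of the theorem is never carried out: for the six small triples and for the two fixed $4\times 4$ blocks in the $(4,4,q)$ family you write only that suitable matrices ``would be displayed'' or ``can be arranged,'' with the in-edge weights ``supplying the extra factors needed.'' That is a promissory note precisely where all the work lies, and your closing claim that the verification is ``mechanical'' is contradicted by the paper itself: in its $(4,4,5)$ analysis, the configuration in which $\tilde{B}_5$ receives two in-edges at distinct vertices provably admits no product-irregular extension of $\tilde{A}_4\oplus\tilde{B}_5\oplus\tilde{C}_4$ (the paper notes $ps>3$ for that assignment), and the proof must escape by permuting the roles of the base matrices, i.e.\ passing to $\tilde{A}_4\oplus\tilde{B}_4\oplus\tilde{C}_5$. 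So existence of a working labelling for \emph{every} placement of the two edges is not automatic from the shape of the argument; a complete proof must either exhibit the matrices case by case (as the paper does, over the same-vertex/different-vertices and middle-clique subcases you correctly identify) or give a uniform argument covering all placements, and your proposal does neither for the residual cases --- including the middle-$K_4$ subcase that you yourself flag as the delicate one.
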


\begin{proof}
For $(n,m,l) = (4,5,6)$ consider the following matrix:
\begin{equation}
A_4 \oplus \begin{pmatrix}
0 & 2 & 2 & 2 & 1 \\
2 & 0 & 3 & 1 & 3 \\
2 & 3 & 0 & 2 & 3 \\
2 & 1 & 2 & 0 & 1 \\
1 & 3 & 3 & 1 & 0
\end{pmatrix} \oplus B_6
\end{equation}
Notice that the second block of this matrix is $\tilde{T}_5$ from \eqref{eq:T6} in which we replaced elements $t_{24}$ and $t_{42}$ from $3$ to $1$.
\\Consider some cases for which we need to add some edges between cliques:
\begin{enumerate}
\item For $(n,m,l) = (4,5,5)$ we will consider $\tilde{A}_4 \oplus \tilde{B}_5 \oplus \tilde{C}_5 +2edges$.
\begin{enumerate}
\item[$(\tilde{B}_5)$] For the case when $d^+(\tilde{B}_5) = 2$ we have two options:
\begin{enumerate}
\item[(1)] If $\tilde{B}_5$ has $2$ in-edges from one vertex, then we can take weighted adjacency matrix $\tilde{A}_4 \oplus \tilde{B}_5 \oplus \tilde{C}_5 + T_{12}(\tilde{A}_4, \tilde{B}_5, \tilde{C}_5, 2, 3, 3) + T_{23}(\tilde{A}_4, \tilde{B}_5, \tilde{C}_5, 3, 3, 2)$ which is product-irregular.
\item[(2)] If $\tilde{B}_5$ has $2$ in-edges from different vertices then we can take weighted adjacency matrix $\tilde{A}_5 \oplus \tilde{B}_5 \oplus \tilde{C}_5 + T_{12}(\tilde{A}_4, \tilde{B}_5, \tilde{C}_5, 3, 3, 3) + T_{23}(\tilde{A}_4, \tilde{B}_5, \tilde{C}_5, 1, 3, 2)$ which is product-irregular.
\end{enumerate}
\item[$(\tilde{A}_4)$] For the case when $d^+(\tilde{A}_4) = 2$ we have two options:
\begin{enumerate}
\item[(1)] If $\tilde{A}_4$ has $2$ in-edges from one vertex then we can take weighted adjacency matrix $\tilde{A}_4 \oplus \tilde{B}_5 \oplus \tilde{C}_5 + T_{12}(\tilde{A}_4, \tilde{B}_5, \tilde{C}_5, 2, 3, 2) + T_{13}(\tilde{A}_4, \tilde{B}_5, \tilde{C}_5, 2, 3, 2)$ which is product-irregular.
\item[(2)] If $\tilde{A}_4$ has $2$ in-edges from different vertices then we can take weighted adjacency matrix $\tilde{A}_4 \oplus \tilde{B}_5 \oplus \tilde{C}_5 + T_{12}(\tilde{A}_4, \tilde{B}_5, \tilde{C}_5, 2, 3, 2) + T_{13}(\tilde{A}_4, \tilde{B}_5, \tilde{C}_5, 4, 3, 2)$ which is product-irregular.
\end{enumerate}
\end{enumerate}

\item For $(n,m,l) = (4,4,5)$ we will consider $\tilde{A}_4 \oplus \tilde{B}_5 \oplus \tilde{C}_4 +2edges$.
\begin{enumerate}
\item[$(\tilde{B}_5)$] For the case when $d^+(\tilde{B}_5) = 2$ we have two options:
\begin{enumerate}
\item[(1)] If $\tilde{B}_5$ has $2$ in-edges from one vertex then we can take weighted adjacency matrix $\tilde{A}_4 \oplus \tilde{B}_5 \oplus \tilde{C}_4 + T_{12}(\tilde{A}_4, \tilde{B}_5, \tilde{C}_4, 2, 3, 3) + T_{23}(\tilde{A}_4, \tilde{B}_5, \tilde{C}_4, 3, 2, 2)$ which is product-irregular.
\item[(2)] Since $ps(\tilde{A}_4 \oplus \tilde{B}_5 \oplus \tilde{C}_4 +2edges) > 3$ for this particular case, i.e. when $d^+(\tilde{B}_5) = 2$ and $\tilde{B}_5$ has $2$ in-edges from different vertices we have to consider $\tilde{A}_4 \oplus \tilde{B}_4 \oplus \tilde{C}_5 +2edges$ when $d^+(\tilde{C}_5) = 2$ and $\tilde{C}_5$ has $2$ in-edges from different vertices. Now we can take weighted adjacency matrix $\tilde{A}_4 \oplus \tilde{B}_4 \oplus \tilde{C}_5 + T_{13}(\tilde{A}_4, \tilde{B}_4, \tilde{C}_5, 2, 3, 3) + T_{23}(\tilde{A}_4, \tilde{B}_4, \tilde{C}_5, 2, 2, 3)$ which is product-irregular.
\end{enumerate}
\item[$(\tilde{C}_4)$] For the case when $d^+(\tilde{C}_4) = 2$ we have two options:
\begin{enumerate} 
\item[(1)] If $\tilde{C}_4$ has $2$ in-edges from one vertex then we can take weighted adjacency matrix $\tilde{A}_4 \oplus \tilde{B}_5 \oplus \tilde{C}_4 + T_{13}(\tilde{A}_4, \tilde{B}_5, \tilde{C}_4, 2, 2, 3) + T_{23}(\tilde{A}_4, \tilde{B}_5, \tilde{C}_4, 3, 2, 3)$ which is product-irregular.
\item[(2)] \label{itm:C4(2)} If $\tilde{C}_4$ has $2$ in-edges from different vertices then we can take weighted adjacency matrix $\tilde{A}_4 \oplus \tilde{B}_5 \oplus \tilde{C}_4 + T_{13}(\tilde{A}_4, \tilde{B}_5, \tilde{C}_4, 2, 2, 3) + T_{23}(\tilde{A}_4, \tilde{B}_5, \tilde{C}_4, 3, 1, 3)$ which is product-irregular.
\end{enumerate}
\end{enumerate}
\item For $(n,m,l) = (4,4,4)$ we will consider $\tilde{A}_4 \oplus \tilde{B}_4 \oplus \tilde{C}_4 +2edges$. Let $\tilde{C}_4$ to have $2$ in-edges, then we have the same proof as in item $(\tilde{C}_4)$\eqref{itm:C4(2)} replacing $\tilde{B}_5$ with $\tilde{B}_4$.
\end{enumerate}
The proof now follows by the above argumentation, together with Theorem \ref{t35e} and Lemma \ref{l3475}.
\end{proof}

\begin{corollary} \label{cmain2}
If G is a connected graph such that its vertex set can be partitioned into $3$ cliques of sizes at least $4$ then $ps(G) = 3$.
\end{corollary}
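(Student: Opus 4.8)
The plan is to deduce this corollary from Theorem~\ref{tmain2} together with the monotonicity of $ps$ under passage to spanning subgraphs, exactly as was done for the clique-cover-number-$2$ case in Section~3 via \cite[Lemma 1]{DH15}. Recall that if $H$ is a spanning subgraph of $G$, then $ps(G)\le ps(H)$: given a product-irregular labelling of $H$ with labels in $\{1,2,3\}$, one extends it to $G$ by assigning the label $1$ to every edge of $G$ not lying in $H$, and multiplying by $1$ leaves all product degrees unchanged, so the extended labelling is still product-irregular. Since the lower bound $ps(G)\ge 3$ holds for every graph (as noted in the introduction), it therefore suffices to exhibit a spanning subgraph $H$ of $G$ with $ps(H)=3$.

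Write $V(G)=C_1\cup C_2\cup C_3$ for the given partition into cliques of sizes $n,m,l\ge 4$. First I would form the auxiliary graph $Q$ on the three vertices $\{C_1,C_2,C_3\}$ in which two of them are joined whenever $G$ contains at least one edge between the corresponding cliques. Because $G$ is connected, $Q$ is connected, and hence $Q$ contains a spanning tree; as $Q$ has only three vertices this tree consists of exactly two edges joining two distinct pairs among $C_1,C_2,C_3$. Choosing, for each of these two tree-edges, an actual edge of $G$ between the corresponding cliques, I obtain two edges $e_1,e_2$ of $G$ lying between two different pairs of cliques.

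Now let $H$ be the spanning subgraph of $G$ whose edge set consists of all edges inside $C_1$, $C_2$ and $C_3$ together with $e_1$ and $e_2$. Since each $C_i$ is a clique of $G$, the subgraph of $H$ induced on $C_i$ is complete, so $H$ is precisely a graph of the form $K_n+K_m+K_l+2edges$ with $n,m,l\ge 4$, and it is a spanning subgraph of $G$ because $V(H)=V(G)$ and $E(H)\subseteq E(G)$. By Theorem~\ref{tmain2} we have $ps(H)=3$, whence $ps(G)\le ps(H)=3$, and combined with $ps(G)\ge 3$ this yields $ps(G)=3$.

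The only point requiring care is the verification in the second paragraph that the two connecting edges can always be chosen between two \emph{distinct} pairs of cliques, so that $H$ genuinely matches the $+2edges$ construction of the corresponding definition; this is where connectivity of $G$ is used, and it is immediate since a spanning tree of a connected three-vertex graph is a path whose two edges automatically use two different pairs. Everything else follows formally from the monotonicity principle and Theorem~\ref{tmain2}.
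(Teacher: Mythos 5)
Your proposal is correct and follows exactly the route the paper intends: the corollary is stated without an explicit proof precisely because it follows from Theorem~\ref{tmain2} together with the spanning-subgraph monotonicity $ps(G)\le ps(H)$ (via \cite[Lemma 1]{DH15}), just as the paper argues for the clique-cover-number-$2$ case in Section~3. Your extra verification that connectivity forces the two connecting edges to lie between two distinct pairs of cliques, so that the spanning subgraph really is of the form $K_n+K_m+K_l+2edges$, is a welcome explicit treatment of the one detail the paper leaves implicit.
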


We would like to conclude the paper with proposing the following problem for possible further research.

\begin{problem}
Are there only finitely many connected graphs with clique cover number $4$ and product irregularity strength more than $3$?
\end{problem}

\end{document}